\theoremstyle{plain}
\newtheorem{theorem}{Theorem}[section]
\newtheorem{algorithm}[theorem]{Algorithm}
\newtheorem{lemma}[theorem]{Lemma}
\newtheorem{proposition}[theorem]{Proposition}
\theoremstyle{remark}
\newtheorem{remark}[theorem]{Remark}
\theoremstyle{definition}
\newtheorem*{acknowledgement}{Acknowledgement}
\newcommand{\thmref}[1]{Theorem~\ref{#1}}
\newcommand{\lemref}[1]{Lemma~\ref{#1}}
\numberwithin{equation}{section}
\newcommand{\Ind}[1]{\mathbf{1}_{\left\{#1\right\}}}
\newcommand{\PP}{\mathbb{P}}
\newcommand{\NN}{\mathbb{N}}
\newcommand{\EE}{\mathbb{E}}
\newcommand{\FF}{\mathbb{F}}
\newcommand{\GG}{\mathbb{G}}
\newcommand{\ZZ}{\mathbb{Z}}
\newcommand{\RR}{\mathbb{R}}
\newcommand{\cA}{\mathcal{A}}
\newcommand{\cB}{\mathcal{B}}
\newcommand{\cC}{\mathcal{C}}
\newcommand{\cD}{\mathcal{D}}
\newcommand{\cE}{\mathcal{E}}
\newcommand{\cG}{\mathcal{G}}
\newcommand{\cI}{\mathcal{I}}
\newcommand{\cL}{\mathcal{L}}
\newcommand{\cP}{\mathcal{P}}
\newcommand{\cT}{\mathcal{T}}
\newcommand{\cU}{\mathcal{U}}
\newcommand{\cV}{\mathcal{V}}
\newcommand{\cW}{\mathcal{W}}
\newcommand{\cX}{\mathcal{X}}
\newcommand{\degr}{\operatorname{deg}}
\newcommand{\dist}{\operatorname{dist}}
\newcommand{\bQ}{\mathbf{Q}}
\newcommand{\bP}{\mathbf{P}}
\newcommand{\capa}{\operatorname{cap}}
\newcommand{\Var}{\operatorname{Var}}
\newcommand{\floor}[1]{\lfloor #1 \rfloor}
\begin{document}

\title[Vacant set of random walk on giant component]{Phase transition for the vacant set left by random walk on the giant component of a random graph}
\author[]{Tobias Wassmer}
\address{\tiny University of Vienna\\Faculty of Mathematics\\Oskar-Morgenstern-Platz 1\\A-1090 Wien\\Austria}
\email{\tiny \href{mailto:tobias.wassmer@univie.ac.at}{tobias.wassmer@univie.ac.at}}

\begin{abstract}
We study the simple random walk on the giant component of a supercritical Erd\H{o}s-R\'enyi random graph on $n$ vertices, in particular the so-called vacant set at level $u$, the complement of the trajectory of the random walk run up to a time proportional to $u$ and $n$. We show that the component structure of the vacant set exhibits a phase transition at a critical parameter $u_{\star}$: For $u<u_{\star}$ the vacant set has with high probability a unique giant component of order $n$ and all other components small, of order at most $\log^{7}n$, whereas for $u>u_{\star}$ it has with high probability all components small. Moreover, we show that $u_{\star}$ coincides with the critical parameter of random interlacements on a Poisson-Galton-Watson tree, which was identified in \cite{Tas10}.
\end{abstract}

\keywords{Random walk, vacant set, Erd\H{o}s-R\'enyi random graph, giant component, phase transition, random interlacements}
\subjclass[2010]{05C81}
\date{\today}

\maketitle
\thispagestyle{empty}


\section{introduction}

Recently, several authors have been studying percolative properties of the vacant set left by random walk on finite graphs and the connections of this problem to the random interlacements model introduced in \cite{Sz10}. The topic was initiated  with the study of random walk on the $d$-dimensional discrete torus in \cite{BS08}, which was further investigated in \cite{TW11}. \cite{CTW11}, \cite{CT11} and \cite{CF11} studied random walk on the random regular graph, and \cite{CF11} also studied random walk on the Erd\H{o}s-R\'enyi random graph  above the connectivity threshold.

In this work we consider the supercritical Erd\H{o}s-R\'enyi random graph below the connectivity threshold. We prove a phase transition in the component structure of the vacant set left by random walk on the giant component of this graph, and we identify the critical point of this phase transition with the critical parameter of random interlacements on a Poisson-Galton-Watson tree.

We start by introducing some notation to precisely state the result. Let $\PP_{n,p}$ be the law of an Erd\H{o}s-R\'enyi random graph, i.e.~a random graph $G$ such that every possible edge is present independently with probability $p=\frac{\rho}{n}$, defined on the space $\cG(n)$ of graphs with vertex set $\{1,2,...,n\}$ endowed with the $\sigma$-algebra $\GG_n$ of all subsets. It is well known that the component structure of $G$ varies with the parameter $\rho$ (see e.g.~\cite{ER61}, \cite{Bol01}, \cite{JLR00}, \cite{Dur10}). We will in this paper consider such a random graph for a fixed constant $\rho>1$. In this case, with probability tending to 1 as $n\to\infty$, the graph $G$ is supercritical: There exists a unique largest connected component $\cC_1(G)$ of size approximately $\xi n$, the so-called giant component. Here, $\xi$ is the unique solution in $(0,1)$ of $e^{-\rho\xi}=1-\xi$.

For a graph $G$ on $n$ vertices and its largest connected component $\cC_1=\cC_1(G)$ (determined by some arbitrary tie-breaking rule), let $P^{\cC_1}$ be the law of the simple discrete-time random walk $(X_k)_{k\geq0}$ on $\cC_1$ started from its stationary distribution, defined on the space $\{1,2,...,n\}^{\NN_0}$ of trajectories on $n$ vertices endowed with the cylinder-$\sigma$-algebra $\FF_n$. Let $\Omega_n=\cG(n)\times\{1,2,...,n\}^{\NN_0}$ endowed with the product $\sigma$-algebra $\GG_n \times \FF_n$, and define the ``annealed'' measure by
\begin{equation} \bP_n(A\times B)=\sum_{G\in A} \PP_{n,p}(G) P^{\cC_1(G)}(B) \qquad \text{for }A\in\GG_n,~B\in\FF_n. \label{def:annealed} \end{equation}
On the product space $\Omega_n$ we define the vacant set of the random walk at level $u$ as
\begin{equation} \cV^{u}=\cC_1 \setminus \{X_k:~0\leq k \leq u\rho(2-\xi)\xi n\}. \label{def:vset} \end{equation}
We refer to Remark~\ref{rem} for an explanation of this somewhat unusual time scaling. Let $\cC_1(\cV^u)$ and $\cC_2(\cV^u)$ be the largest and second largest connected components of the subgraph induced by $\cV^u$.

\begin{theorem} \label{thm1}
The component structure of the subgraph induced by $\cV^{u}$ exhibits a phase transition at a critical value $u_{\star}$:
\begin{itemize}
\item For $u<u_{\star}$, there are positive constants $\zeta(u,\rho)\in(0,1)$, $C<\infty$, such that for every $\epsilon>0$,
\begin{gather} \lim_{n\to\infty} \bP_n\left[\left|\frac{|\cC_1(\cV^u)|}{n}-\zeta(u,\rho)\right|\leq \epsilon \right] = 1, \label{eq:thmeq1} \\
								\lim_{n\to\infty} \bP_n\left[\frac{|\cC_2(\cV^u)|}{\log^{7}n}\leq C \right] = 1. \label{eq:thmeq2} \end{gather}
\item For $u>u_{\star}$, there is a positive constant $C<\infty$, such that
\begin{equation} \lim_{n\to\infty} \bP_n\left[\frac{|\cC_1(\cV^u)|}{\log^{7}n}\leq C \right] = 1. \label{eq:thmeq3} \end{equation}
\end{itemize}
The critical parameter $u_{\star}$ is the same as the critical parameter of random interlacements on a Poisson($\rho$)-Galton-Watson tree conditioned on non-extinction, which is by \cite{Tas10} given as the solution of a certain equation.
\end{theorem}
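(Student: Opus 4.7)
The strategy is to exploit the fact that the giant component $\cC_1$ of a supercritical Erd\H{o}s-R\'enyi graph is locally tree-like: within any ball of radius $o(\log n)$ around a typical vertex, $\cC_1$ is with high probability isomorphic to a truncation of a Poisson($\rho$)-Galton-Watson tree conditioned on non-extinction. The time scale $u\rho(2-\xi)\xi n$ in \eqref{def:vset} is calibrated so that, after restriction to this local tree, the trace of the walk has the same local law as random interlacements at level $u$ on the limiting tree. This matching is precisely the reason that $u_\star$ coincides with the interlacements threshold identified in \cite{Tas10}.

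The first step is to make the local coupling precise. Using rapid mixing of the walk on $\cC_1$ (mixing time $O(\log n)$), one splits the walk of length $T_n=u\rho(2-\xi)\xi n$ into $T_n/t_{\mathrm{mix}}$ essentially independent excursions. A soft local times construction, in the style of \cite{CTW11,CT11}, then compares the number of visits made to a fixed local neighbourhood to a Poisson process with the correct intensity. The outcome is that, for a uniformly chosen vertex $v$, the law of $\cV^u$ restricted to the ball of radius $r_n=c\log\log n$ around $v$ is, with high probability and up to total-variation error $o(1)$, the law of the vacant set of random interlacements at level $u$ on the PGW tree, restricted to the first $r_n$ generations.

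For the supercritical vacant regime $u<u_\star$, this coupling gives that the vacant cluster of a typical vertex survives to depth $r_n$ with probability tending to $\zeta(u,\rho)$, the survival probability for interlacements on the infinite PGW tree. A first-moment computation then produces about $\zeta(u,\rho)n$ vertices contained in ``locally large'' vacant clusters. To merge these into a single giant cluster I would apply a sprinkling argument: write $T_n=T_n'+\Delta$ with $\Delta=o(T_n)$, prove that at level $u-\epsilon$ most locally large clusters are already joined through long vacant paths — using the expander-like isoperimetric properties of $\cC_1$ — and show that the extra $\Delta$ steps remove only $o(n)$ additional vertices, so that the giant vacant cluster is unique and has size $(\zeta(u,\rho)+o(1))n$. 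The bound $|\cC_2(\cV^u)|\leq C\log^7 n$ comes from exponential tails for finite interlacements clusters on the tree, combined with a union bound over the $O(n/\log n)$ vertices whose $r_n$-neighbourhood contains a cycle.

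For $u>u_\star$, the argument is considerably simpler: random interlacements on the PGW tree at supercritical level produce only finite vacant clusters a.s., with exponential tails on cluster sizes, so the local coupling shows that the vacant cluster of any fixed vertex has size at most $C\log^7 n$ with probability $1-o(n^{-1})$, and a union bound yields \eqref{eq:thmeq3}. The main obstacle I expect is the sprinkling step in the supercritical phase: the random walk induces long-range correlations in $\cV^u$ that are far more delicate than in site percolation, and $\cC_1$ is neither regular nor transitive, so the standard sprinkling tricks must be combined carefully with the local coupling and with quantitative control of the walk's return probabilities to intermediate-scale sets.
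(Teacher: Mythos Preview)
Your approach --- local coupling to interlacements plus sprinkling --- follows the paradigm of the torus and random-regular-graph papers and is plausible; you rightly flag sprinkling on a non-regular, non-transitive graph as the hard part. The paper takes a different and much shorter route that avoids sprinkling altogether. It introduces a modified process $\bar X$ on the \emph{whole} graph $G$ (not just $\cC_1$): $\bar X$ walks as usual but, upon covering its current component, jumps to a uniformly chosen vertex. Because edges between unvisited vertices are then still unexplored i.i.d.\ Bernoulli($p$), the vacant graph $\bar{\cV}(t)$ left by $\bar X$, conditioned on $|\bar{\cV}(t)|=N$, is \emph{exactly} an Erd\H{o}s--R\'enyi graph with law $\PP_{N,p}$. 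The entire component structure of $\bar{\cV}^u$ --- existence and size of the giant, its uniqueness, and the $O(\log n)$ bound on all other components --- then comes for free from classical random graph theory. What remains is (i) a coupling showing $X_k=\bar X_{k+2\log^5 n}$ for all relevant $k$ with high probability, so that $\cV^u$ and $\bar{\cV}^u$ differ only by the small components of $G$ and a polylog correction, and (ii) a computation of $|\cV^u|$ to locate the threshold; the local-tree and interlacements comparison appears only in (ii), as a first-moment calculation rather than a pathwise coupling. Thus all the global structural work you propose --- sprinkling, uniqueness, cluster-size tail bounds --- is outsourced to classical Erd\H{o}s--R\'enyi theory. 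Two minor corrections to your sketch: the mixing time of the walk on $\cC_1$ is $O(\log^2 n)$ rather than $O(\log n)$, and a local radius $r_n=c\log\log n$ is far too small to yield the decay needed for a union bound over $n$ vertices; one needs $r=\gamma\log n$.
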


We refer to Section~\ref{sec:ri} for a short summary of the used results on random interlacements and its critical parameter, and the derivation of the characterizing equation~\eqref{eq:ustarri} for $u_{\star}$. The constant $\zeta(u,\rho)$ is given as the solution of equation~\eqref{eq:zeta}.

\vspace{\baselineskip}
\thmref{thm1} confirms the following general principle: The vacant set of random walk on a sufficiently fast mixing graph exhibits a phase transition and the critical point is related to the critical value of random interlacements on the corresponding infinite volume limit.

This principle has been investigated recently in several other situations. Results that are more detailed than \thmref{thm1} are known to hold for random walk on a random $d$-regular graph on $n$ vertices run up to time $un$: \cite{CTW11} and with different methods \cite{CF11} proved the phase transition in the component structure of the vacant graph, \cite{CTW11} identified the critical parameter $u_{\star}$ with the critical value of random interlacements on the infinite $d$-regular tree, and \cite{CT11} showed that there is a critical window of width $n^{-\frac{1}{3}}$ around $u_{\star}$ in which the largest component is of order $n^{\frac{2}{3}}$. \cite{CF11} used their methods to also prove a phase transition for random walk on the Erd\H{o}s-R\'enyi random graph above the connectivity threshold ($\rho \gg \log n$). Weaker statements are known for random walk run up to time $uN^d$ on the discrete $d$-dimensional torus of sidelength $N$, see \cite{BS08} and \cite{TW11}. The statements in this case are proved for $u$ small or large enough respectively, but it is only conjectured that there is indeed a phase transition at a critical parameter $u_{\star}$ that coincides with the critical value of random interlacements on $\ZZ^d$ (cf.~Conjecture 2.6 in \cite{CT12}). We believe that in our case, as in \cite{CT11} for the random regular graph, it should be possible to prove the existence of a critical window around the critical point. We did not further investigate this.

\vspace{\baselineskip}
The main difficulties in proving \thmref{thm1} compared to previous results are that our graph, i.e.~the giant component of an Erd\H{o}s-R\'enyi random graph, is of random size and non-regular. The proof consists of three main steps. The key idea of the first step is the following ``spatial Markov property'' of random walk on a random graph. Instead of sampling a random graph and performing random walk on the fixed graph, one can consider sites unvisited by the random walk as not yet sampled sites of the random graph. Then the unvisited or vacant part of the graph has the law of some random graph, depending on the random graph model. In the case of a connected Erd\H{o}s-R\'enyi random graph the vacant part is again an Erd\H{o}s-R\'enyi random graph, this was used to prove the phase transition in \cite{CF11}. In the case of a random regular graph the vacant part is a random graph with a given degree sequence, a well-studied object (see e.g.~\cite{HM12}). This was used to prove the phase transition in \cite{CF11} and the critical behaviour in \cite{CT11}. 

The situation in our case is more involved, because we consider random walk only on the giant component of a not connected Erd\H{o}s-R\'enyi random graph. This random walk cannot satisfy such a spatial Markov property, since the graph must be fixed in advance for the giant component to be known. To be able to still use the idea, we introduce in Algorithm~\ref{algo} a process $\bar{X}=(\bar{X}_k)_{k\geq0}$ on an Erd\H{o}s-R\'enyi random graph that behaves like a random walk but jumps to another component after having covered a component. In \lemref{lem:defdec} we make precise the aforementioned spatial Markov property for this process $\bar{X}$, namely that the vacant graph left by $\bar{X}$ still has the law of an Erd\H{o}s-R\'enyi random graph, but with different parameters. The classical results on random graphs imply a phase transition for this vacant graph.

In a second step we translate this phase transition to the vacant graph left by the simple random walk $X=(X_k)_{k\geq0}$ on the giant component. To this end, we introduce in Proposition~\ref{prop:coupling} a coupling of $X$ and $\bar{X}$ where the two processes are with high probability identified in a certain time interval. This can be done because the process $\bar{X}$ will typically ``find'' the giant component after a short time and then stay on it long enough.

The third step, requiring most of the technical work, is the identification of the critical point of the phase transition. From \lemref{lem:defdec} it is clear that the crucial quantity deciding the critical point is the size of the vacant set left by $\bar{X}$. The coupling of $X$ and $\bar{X}$ has the property that the sizes of the vacant sets of $X$ and $\bar{X}$ are closely related (\lemref{lem:sizescomp}), which allows to reduce the problem to the investigation of the size of the vacant set left by $X$. The first part of this paper, Section~\ref{sec:size}, is devoted to this investigation. In Proposition~\ref{prop:sizeexpconc} we will on one hand compute the expectation of the size of the vacant set left by $X$, and on the other hand we will show that the size of the vacant set left by $X$ is concentrated around its expectation.

\vspace{\baselineskip}
We close the introduction with a remark on the connection to random interlacements and a heuristic explanation of the time scaling $u\rho(2-\xi)\xi n$ that appears in the definition \eqref{def:vset} of $\cV^u$. For readers unfamiliar with random interlacements and the notation, we refer to Section~\ref{sec:not}, in particular Section~\ref{sec:ri}.

\begin{remark} \label{rem}
In the giant component $\cC_1$ of an Erd\H{o}s-R\'enyi random graph the balls $B(x,r)$ around a vertex $x$ with radius $r$ of order $\log n$ typically look like balls around the root $\varnothing$ in a Poisson($\rho$)-Galton-Watson tree $\cT$ conditioned on non-extinction. One expects that random interlacements on $\cT$ give a good description of the trace of random walk on $\cC_1$ locally in such balls, where the intensity $u$ of random interlacements is proportional to the running time of the walk. To determine the proportionality factor, we compare the probability that a vertex $x\in\cC_1$ has not been visited by the random walk on $\cC_1$ up to time $t$ with the probability that the root $\varnothing\in\cT$ is in the vacant set of random interlacements on $\cT$ at level $u$.

Note first that the probability that the random walk on $\cC_1$ started at $x$ leaves a ball of large radius around $x$ before returning to $x$ is approximately the same as the probability that the random walk on $\cT$ started at the root never returns to the root,
\begin{equation} P^{\cC_1}_x[\tilde{H}_x > H_{B(x,r)^c}]\approx P_{\varnothing}^{\cT}[\tilde{H}_{\varnothing}=\infty]. \label{eq:escapeprob} \end{equation}
The main task of Section~\ref{sec:size} will be rigorous proof of the following approximation for the random walk on $\cC_1$,
\begin{equation} P^{\cC_1}[x \text{ is vacant at time }t] \approx e^{-t P^{\cC_1}_x[\tilde{H}_x > H_{B(x,r)^c}] \pi(x)}. \label{eq:vacantrw} \end{equation}
We will also show that the average degree of a vertex in $\cC_1$ is $\rho(2-\xi)$, and so the stationary distribution $\pi$ of the random walk on $\cC_1$ is $\pi(x) \approx \frac{\degr(x)}{\rho(2-\xi)\xi n}$. On the other hand, according to \cite{Tei09}, the law $Q^u$ of the vacant set of random interlacements on the infinite graph $\cT$ at level $u$ satisfies
\begin{equation} Q^u[\varnothing \text{ is vacant}] = e^{-u \capa_{\cT}(\varnothing)}, \label{eq:vacantri} \end{equation}
where the capacity is here $\capa_{\cT}(\varnothing)=\degr(\varnothing) P_{\varnothing}^{\cT}[\tilde{H}_{\varnothing}=\infty]$. As argued above, random interlacements describe the random walk locally, so the probabilites \eqref{eq:vacantrw} and \eqref{eq:vacantri} should be approximately equal for the time $t$ corresponding to random interlacements at level $u$. The approximation of $\pi(x)$ together with \eqref{eq:escapeprob} leads to $t=u\rho(2-\xi)\xi n$ if the parameter $u$ in both models should be the same.

Compared to the time scalings $uN^d$ and $un$ in the discussions of random walk on the torus (\cite{BS08}, \cite{TW11}) and random regular graphs (\cite{CTW11}, \cite{CT11}) respectively, where only the size of the graph (in our case the factor $\xi n$) appears in the time scaling, the additional factor $\rho(2-\xi)$ for the average degree might be surprising. It is however only a consequence of how one defines the uniform edge-weight on the underlying graph, which scales the capacity by a constant. For the aforementioned $2d$-regular graphs the weight chosen is $\frac{1}{2d}$. For non-regular graphs it is the canonical choice to define edge weights as 1, as is done in \cite{Tei09} and \cite{Tas10}, and we stick to this definition.
\end{remark}

\vspace{\baselineskip}
The paper is structured as follows. In Section~\ref{sec:not} we introduce some further notation and recall some facts on random graphs, random walks, and random interlacements. In Section~\ref{sec:size} we investigate the size of the vacant set left by the simple random walk $X$ on the giant component. In Section~\ref{sec:coupling} we introduce the process $\bar{X}$ and compare it to the random walk $X$. Finally, we gather all intermediate results to prove \thmref{thm1} in Section~\ref{sec:proof}.

\vspace{\baselineskip}
\begin{acknowledgement} The author would like to thank Ji\v{r}\'{i} \v{C}ern\'{y} for suggesting the problem and for helpful discussions, and the referee for carefully reading the manuscript and giving important comments that helped to improve the paper. \end{acknowledgement}


\vspace{\baselineskip}
\section{Notations and preliminaries} \label{sec:not}

We will denote by $c$, $c'$, $c''$ positive finite constants with values changing from place to place. $\epsilon$ will always denote a small positive constant with value changing from place to place. All these constants may depend on $u$ and $\rho$, but not on any other object. We will tacitly assume that values like $u\rho(2-\xi)\xi n$, $\log^5n$, $n^{\epsilon}$ etc.~are integers, omitting to take integer parts to ease the notation.

We use the standard $o$- and $O$-notation: Given a positive function $g(n)$, a function $f(n)$ is $o(g)$ if $\lim_{n\to\infty} f/g =0$, and it is $O(g)$ if $\limsup_{n\to\infty} |f|/g < \infty $. We extend this notation to random variables in the following way. For a random variable $A_n$ on a space $(\Omega_n,Q_n)$ we use the notation ``$A_n=f(n)+o(g)$ $Q_n$-asymptotically almost surely'' meaning ``$\forall~\epsilon>0$, \mbox{$Q_n[|A_n-f(n)|\leq\epsilon g(n)]\to1$} as $n\to\infty$'', and ``$A_n=O(g)$ $Q_n$-asymptotically almost surely'' meaning ``$\exists~C>0$ such that $Q_n[|A_n|\leq Cg(n)]\to1$ as $n\to\infty$''.

\subsection{(Random) graphs}
For a non-oriented graph we use the notation $G$ to denote the set of vertices in the graph as well as the graph itself, consisting of vertex-set and egde-set. For vertices $x,y\in G$, $x\sim y$ means that $x$ and $y$ are neighbours, i.e.~$\{x,y\}$ is an edge of $G$. We denote by $\degr(x)$ the number of neighbours of $x$ in $G$, and by $\Delta_G=\max_{x\in G} \degr(x)$ the maximum degree. By $\dist(x,y)$ we denote the usual graph distance, and for $r\in\NN$, $B(x,r)$ is the set of vertices $y$ with $\dist(x,y)\leq r$. For a subset $A\subset G$, denote its complement $A^c=G\setminus A$ and its (interiour) boundary $\partial A = \{x\in A:~\exists y\in A^c,~x\sim y\}$.

We denote by $\cC_i(G)$ the $i$-th largest connected component of a graph $G$. If there are equally large components, we order these arbitrarily. The subgraph induced by a vertex-set $V\subset G$ is defined as the graph with vertices $V$ and edges $\{x,y\}$ if and only if $x,y\in V$ and $x\sim y$ in $G$. Again we use the notation $\cC_i(G)$ for the set of vertices as well as for the induced subgraph. Usually (but not necessarily) $\cC_1=\cC_1(G)$ will be the unique giant component. A graph or graph component is called ``simple'' if it is connected and has at most one cycle, i.e.~the number of edges is at most equal to the number of vertices. 

Recall from the introduction that $\PP_{n,p}$ denotes the law of an Erd\H{o}s-R\'enyi random graph, i.e.~a random graph on $n$ vertices such that every edge is present independently with probability $p=\frac{\rho}{n}$. Let $\EE_{n,p}$ be the corresponding expectation. An event is said to hold ``asymptotically almost surely'' (a.a.s.) if it holds with probability tending to 1 as $n\to\infty$ (cf.~the above defined $o$- and $O$-notation). Throughout this work $\rho>1$ is a fixed constant. It is well known that the following properties then hold $\PP_{n,p}$-a.a.s.
\begin{align}
&\parbox{15.1cm}{The graph $G$ has a unique giant component $\cC_1$ of size $|\cC_1|$ satisfying $\left| |\cC_1| - \xi n \right| \leq n^{3/4}$, where $\xi$ is the unique solution in $(0,1)$ of $e^{-\rho\xi}=1-\xi$. All other components are simple and of size smaller than $C \log n$, for some fixed constant $C$.} \label{prop:giant} \\
&\parbox{15.1cm}{The spectral gap $\lambda_{\cC_1}$ of the random walk on the giant component (cf.~\eqref{def:gap}) satisfies $\lambda_{\cC_1}\geq \frac{c}{\log^2n}$ for some fixed constant $c$.} \label{prop:gap} \\
&\parbox{15.1cm}{The maximum degree $\Delta_G$ satisfies $\Delta_G \leq \log n$.} \label{prop:maxdeg}
\end{align}
\eqref{prop:giant} and \eqref{prop:maxdeg} are classical results (see e.g.~\cite{ER61}, \cite{Bol01}, \cite{JLR00} or \cite{Dur10}), and \eqref{prop:gap} follows from \cite[Theorem 12.4 ]{LPW09} with the $O(\log^2n)$ bound on the mixing time of the random walk on the giant component proved in \cite{BKW06}. We use the terminology ``typical graphs'' for graphs $G$ on $n$ vertices satisfying \eqref{prop:giant}, \eqref{prop:gap} and \eqref{prop:maxdeg}. We will usually prove our statements for typical graphs only, since we are interested in a.a.s.-behaviour.

For a quantitative version of the first statement in \eqref{prop:giant} see \cite[Theorem 4.8]{vdH08}, which states that
\begin{equation} \PP_{n,p} \left[\left| |\cC_1| - \xi n \right| > n^{3/4}\right] \leq cn^{-c'}. \label{prop:giantquant} \end{equation}
The choice of the constant 3/4 is arbitrary.

We will also need a quantitative version of \eqref{prop:maxdeg}, we therefore briefly present a proof. Fix a vertex $x\in G$ and denote all other vertices by $y_i$, $i=1,...,n-1$. Let $\cE_i=\Ind{\{x,y_i\}\text{ is an edge}}$. Then the $\cE_i$ are i.i.d.~Bernoulli($p$) random variables, $\degr(x)=\sum_{i=1}^{n-1} \cE_i$, and for any fixed $\alpha>0$ by the exponential Chebyshev inequaliy,
\begin{align*} \PP_{n,p}[\degr(x)>\log n] \leq n^{-\alpha}\EE_{n,p}\left[e^{\alpha\sum \cE_i}\right] = n^{-\alpha}\left(1+\frac{\rho}{n}(e^{\alpha}-1)\right)^{n-1} \leq cn^{-\alpha}, \end{align*}
where the constant $c$ depends on $\alpha$. We choose $\alpha=4$, this will be suitable for our purposes. Then a union bound implies
\begin{equation} \PP_{n,p}[\Delta_G>\log n] \leq n \PP_{n,p}[\degr(x)>\log n] \leq cn^{1-\alpha}= cn^{-3}. \label{eq:pmaxdeg} \end{equation}

\subsection{Random walks}
Let $P^{\cC_1}$ be the law and $E^{\cC_1}$ the corresponding expectation of the simple discrete-time random walk $X=(X_k)_{k\geq0}$ on the component $\cC_1$ started stationary, i.e.~the law of the Markov chain with state space $\cC_1$, transition probabilities $p_{xy}=\frac{1}{\deg(x)}\Ind{x\sim y}$ and $X_0 \sim \pi$, where $\pi$ is the stationary distribution, $\pi(x)=\frac{\degr(x)}{\sum_{y\in \cC_1} \degr(y)}$. \eqref{prop:giant} and the a.a.s.~upper bound \eqref{prop:maxdeg} on the maximum degree $\Delta_G$ imply the following bounds on $\pi$. $\PP_{n,p}$-a.a.s.
\begin{align} \pi(x) &= \frac{\degr(x)}{\sum_{v\in \cC_1} \degr(v)} \leq \frac{c\log n}{n} \label{eq:piu}, \\
							\pi(x) &= \frac{\degr(x)}{\sum_{v\in \cC_1} \degr(v)} \geq \frac{c}{n\log n}. \label{eq:pil} \end{align}
							
For real numbers $0\leq s\leq r$ denote by $X_{[s,r]}=\{X_k:~s\leq k \leq r\}$ the set of vertices visited by $X$ between times $s$ and $r$. We let the random walk $X$ run up to time $t$ and denote by $\cV(t)=\cC_1\setminus X_{[0,t]}$ the vacant set left by the random walk at time $t$, and again we use the notation $\cV(t)$ to also denote the subgraph of $\cC_1$ induced by these vertices. As defined in \eqref{def:vset}, we will use the short notation $\cV^u$ for $\cV(u\rho(2-\xi)\xi n)$.

We will, where it is clear in the context, drop the superscript from $P^{\cC_1}$ and $E^{\cC_1}$. The notation $P_x$ is then used to denote the law of the random walk on $\cC_1$ started at vertex $x$, $E_x$ is the corresponding expectation. For a set $A\subset \cC_1$ we denote by
\begin{equation*} H_A=\inf\{t\geq0:~X_t\in A\}, \qquad \tilde{H}_A=\inf\{t\geq1:~X_t\in A\}\end{equation*}
the entrance time and hitting time respectively of $A$, and we write $H_x$ and $\tilde{H}_x$ if $A=\{x\}$. From \cite[Lemma 2]{AB92} or \cite[Chapter 3, Proposition 21]{AFb} together with \eqref{eq:piu} we get the following bound on $E[H_x]$. $\PP_{n,p}$-a.a.s.~for all $x\in \cC_1$,
\begin{equation} E[H_x] \geq \frac{(1-\pi(x))^2}{\pi(x)} \geq \frac{cn}{\log n}. \label{eq:epihl} \end{equation}

For all real valued functions $f$ and $g$ on $\cC_1$ define the Dirichlet form
\begin{equation} \cD(f,g)=\frac{1}{2} \sum_{x,y\in \cC_1} (f(x)-f(y))(g(x)-g(y)) \pi(x) p_{xy}. \label{def:D} \end{equation}
A function $f$ on $\cC_1$ is harmonic on $A\subset\cC_1$ if $\sum_y p_{xy} f(y) = f(x)$ for $x\in A$. For $x\in \cC_1$ and $r\in\NN$ define the equilibrium potential $g^{\star}:\cC_1\to\RR$ as the unique function harmonic on $B(x,r)\setminus \{x\}$, 1~on~$\{x\}$ and 0 on $B(x,r)^c$. The dependence of $g^{\star}$ on $x$ and $r$ is kept implicit. Then it is well known that
\begin{align} g^{\star}(y) &= P_y\left[H_x < H_{B(x,r)^c}\right], \label{eq:gstar} \\
							\cD(g^{\star},g^{\star}) &= P_x\left[\tilde{H}_x > H_{B(x,r)^c}\right]\pi(x). \label{eq:Dgstar} \end{align}

The spectral gap of the random walk on $\cC_1$ is given by
\begin{equation} \lambda_{\cC_1} = \min\{\cD(f,f):~ \pi(f^2)=1,~\pi(f)=0\}. \label{def:gap} \end{equation}
The relevance of the bound \eqref{prop:gap} on $\lambda_{\cC_1}$ is in the speed of mixing of the random walk on $\cC_1$. From \cite[Theorem 12.3 and Lemma 6.13]{LPW09} it follows that for all $t\in\NN$
\begin{equation} \max_{x,y \in \cC_1}\left|P_x[X_t=y]-\pi(y)\right| \leq \frac{1}{\min_{z\in \cC_1} \pi(z)} e^{-\lambda_{\cC_1}t}. \label{eq:mix} \end{equation}

\subsection{Random interlacements} \label{sec:ri}
Random interlacements were introduced in \cite{Sz10} on $\ZZ^d$ as a model to describe the local structure of the trace of a random walk on a large discrete torus, and in \cite{Tei09} the model was generalized to arbitrary transient graphs. It is a special dependent site-percolation model where the occupied vertices on a graph are constructed as the trace left by a Poisson point process on the space of doubly infinite trajectories modulo time shift. The density of this Poisson point process is determined by a parameter $u>0$. The critical value $u_{\star}$ is the infimum over the $u$ for which almost surely all connected components of non-occupied vertices are finite.

In \cite{Tas10} it is shown that for random Galton-Watson trees the critical value $u_{\star}$ is almost surely constant with respect to the tree measure and is implicitly given as the solution of a certain equation. Except for the identification of the critical parameter of \thmref{thm1} with this $u_{\star}$ as the solution of the same equation, we will not use any results on random interlacements. We refer to the lecture notes \cite{CT12} for an introduction to random interlacements and many more references.

We quote the result from \cite{Tas10} to derive the characterizing equation for $u_{\star}$ in the case of a Poisson-Galton-Watson tree. This requires some more notation. Denote by $\PP_{\cT}$ the law of the supercritical Poisson($\rho$)-Galton-Watson rooted tree conditioned on non-extinction and by $\EE_{\cT}$ the corresponding conditional expectation. Let $f(s)=e^{\rho(s-1)}$ be the probability generating function of the Poisson($\rho$) distribution, and denote by $q$ the extinction probability of a (unconditioned) Poisson($\rho$)-Galton-Watson tree. It is well known that $q$ is the unique solution in $(0,1)$ of the equation $f(s)=s$, and hence $q=1-\xi$, where $\xi$ is as in \eqref{prop:giant}. Let 
\begin{equation} \tilde{f}(s) = \frac{f((1-q)s+q) -q}{1-q}. \label{eq:tildef} \end{equation}
This is in fact the probability generating function of the offspring in the subtree of vertices with infinite line of descent (see e.g.~\cite[Proposition 5.26]{LP}).

Consider the simple discrete-time random walk $(X_k)_{k\geq0}$ on the rooted tree $\cT$ started at the root $\varnothing$, whose law we denote by $P^{\cT}_{\varnothing}$, and let $\tilde{H}_{\varnothing}=\inf\{t\geq1:~X_t=\varnothing\}$ be the hitting time of the root. Define the capacity of the root by $\capa_{\cT}(\varnothing)=\degr(\varnothing) P_{\varnothing}^{\cT}[\tilde{H}_{\varnothing}=\infty]$. 

By \cite[Theorem 1]{Tas10}, the critical parameter $u_{\star}$ of random interlacements on the Galton-Watson tree conditioned on non-extinction is $\PP_{\cT}$-a.s.~constant and given as the unique solution in $(0,\infty)$ of the equation
\begin{equation*} \left( \tilde{f}^{-1}\right)' \left( \EE_{\cT}\left[e^{-u \capa_{\cT}(\varnothing)}\right] \right) =1. \end{equation*}
In particular for the Poisson($\rho$)-Galton-Watson tree,
\begin{equation*} \left( \tilde{f}^{-1}\right)'(t) = \frac{1}{\rho\xi t + \rho(1-\xi)}, \end{equation*}
and $u_{\star}$ is the solution of 
\begin{equation} \rho \xi \EE_{\cT}\left[e^{-u \capa_{\cT}(\varnothing)}\right]+\rho(1-\xi)=1. \label{eq:ustarri} \end{equation}


\vspace{\baselineskip}
\section{Size of the vacant set} \label{sec:size}

In this section we investigate the size of the vacant set $\cV^{u}$ left by the random walk $X$ on the giant component $\cC_1$. As already mentioned we omit the superscripts from $P^{\cC_1}$ and $E^{\cC_1}$. Recall the definition \eqref{def:annealed} of the annealed measure $\bP_n$.

\begin{proposition} ~ \label{prop:sizeexpconc}
\begin{enumerate}
\item $E[|\cV^{u}|]$ can asymptotically be approximated in terms of a Poisson($\rho$)-Galton-Watson tree conditioned on non-extinction:
\begin{equation*} E[|\cV^{u}|] = \xi n \EE_{\cT}\left[e^{-u \capa_{\cT}(\varnothing)}\right] +o(n) \qquad \PP_{n,p} \text{-a.a.s.} \end{equation*}
\item The random variable $|\cV^{u}|$ is concentrated around its mean:
\begin{equation*} |\cV^{u}| = E[|\cV^{u}|] + o(n) \qquad \bP_n \text{-a.a.s.} \end{equation*}
\end{enumerate}
\end{proposition}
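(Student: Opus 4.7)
I would begin from
\[
E[|\cV^{u}|] = \sum_{x \in \cC_1} P[x \notin X_{[0,t]}], \qquad t = u\rho(2-\xi)\xi n,
\]
and reduce the problem to a quantitative version of \eqref{eq:vacantrw}: on typical graphs, for every $x \in \cC_1$ and a suitable radius $r$ (slowly growing, with $|B(x,r)| \le n^{\epsilon}$),
\[
P[x \notin X_{[0,t]}] = \exp\bigl(-t\,\pi(x)\,P_x[\tilde H_x > H_{B(x,r)^c}]\bigr)\,(1+o(1)).
\]
The standard route is Poissonization via an excursion decomposition: split the trajectory into consecutive excursions between $B(x,r)$ and $B(x,r)^c$, use \eqref{eq:mix} with the spectral-gap bound \eqref{prop:gap} to argue that successive excursions are approximately independent (the $O(\log^2 n)$ mixing time is negligible against $t = \Theta(n)$), and invoke \eqref{eq:Dgstar} to identify $\pi(x)P_x[\tilde H_x > H_{B(x,r)^c}]$ as $\cD(g^{\star},g^{\star})$. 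A geometric-to-Poisson limit for the number of excursions that visit $x$ yields the exponential, with \eqref{prop:maxdeg} keeping $\pi(B(x,r))$ under control.

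To pass to the Galton--Watson formula, I would invoke the standard coupling between $G(n,\rho/n)$ and the Poisson($\rho$)-Galton--Watson tree: for all but $o(n)$ vertices $x \in \cC_1$, $B(x,r)$ is a tree isomorphic to the $r$-truncation of a Galton--Watson tree rooted at a vertex with an infinite line of descent, and on these vertices $P_x[\tilde H_x > H_{B(x,r)^c}] = P_\varnothing^{\cT}[\tilde H_\varnothing = \infty] + o(1)$ by monotone convergence on the infinite tree. Using $\pi(x) = \degr(x)/(\rho(2-\xi)\xi n)\,(1+o(1))$, the exponent becomes $u\,\degr(x)\,P_\varnothing^{\cT}[\tilde H_\varnothing = \infty] = u\,\capa_{\cT}(\varnothing)$; summing over $x \in \cC_1$ and using that the empirical joint distribution of $(\degr(x),\text{shape of }B(x,r))$ converges to the corresponding Galton--Watson distribution under conditioning on non-extinction (the degree-biasing introduced by $x \in \cC_1$ matches exactly the root biasing by survival) gives $E[|\cV^{u}|] = |\cC_1|\,\EE_{\cT}[e^{-u\capa_{\cT}(\varnothing)}] + o(n)$. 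Finally \eqref{prop:giant} or \eqref{prop:giantquant} replaces $|\cC_1|$ by $\xi n$.

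\textbf{Concentration (part (2)).} I would use Chebyshev's inequality under $\bP_n$ via
\[
\Var_{\bP_n}(|\cV^{u}|) = \EE_{\PP_{n,p}}\bigl[\Var^{\cC_1}(|\cV^{u}|)\bigr] + \Var_{\PP_{n,p}}\bigl(E^{\cC_1}[|\cV^{u}|]\bigr).
\]
Expanding $\Var^{\cC_1}(|\cV^{u}|) = \sum_{x,y}\bigl(P[\{x,y\} \subset \cV^{u}] - P[x \in \cV^{u}]P[y \in \cV^{u}]\bigr)$: pairs with $\dist(x,y) \le 2r$ contribute at most $n \cdot \max_x |B(x,2r)| \le n^{1+\epsilon} = o(n^2)$, and for $\dist(x,y) > 2r$ a two-point version of the Poissonization (the excursions in the disjoint balls $B(x,r)$ and $B(y,r)$ decorrelate after mixing) gives a covariance $o(1)$ uniformly, also summing to $o(n^2)$. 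The second term is $o(n^2)$ by part (1) combined with the deterministic bound $|\cV^{u}| \le n$: indeed $E^{\cC_1}[|\cV^{u}|]$ equals a deterministic quantity up to error $o(n)$ on a $\PP_{n,p}$-event of probability tending to $1$, which forces the variance to be $o(n^2)$. Chebyshev then gives the claimed concentration.

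\textbf{Main obstacle.} The principal technical difficulty is the quantitative single-point approximation \eqref{eq:vacantrw}: one must choose $r$ simultaneously so that $B(x,r)$ is small enough that excursions out of it are short, large enough that $P_x[\tilde H_x > H_{B(x,r)^c}]$ is within $o(1)$ of its Galton--Watson limit, and such that the polylog mixing time \eqref{prop:gap} is negligible against $t$, all \emph{uniformly} in $x \in \cC_1$ on the non-regular giant component (ruling out the symmetry arguments of \cite{CTW11}). Once this is in hand, the two-point decorrelation needed for (2) follows by running the same machinery on disjoint ball pairs, and the averaging to Galton--Watson expectations via local convergence is routine.
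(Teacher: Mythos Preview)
Your outline is broadly sound, but it diverges from the paper in two places worth flagging.

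\textbf{Single-vertex probability.} The paper does not run an excursion decomposition to obtain \eqref{eq:vacantrw}. Instead it invokes the Aldous--Brown exponential approximation $P[H_x>t]\approx e^{-t/E[H_x]}$ (eq.~\eqref{eq:expapprox0}) and then bounds $1/E[H_x]$ by $\cD(g^\star,g^\star)=\pi(x)P_x[\tilde H_x>H_{B(x,r)^c}]$ via a result quoted from \cite{CTW11} (Proposition~\ref{prop:ctw}); the main work is controlling the error function $f^\star$ in that bound (Lemma~\ref{lem:bfstar}). Your excursion route is a legitimate alternative, but note that the paper never needs the single-point approximation \emph{uniformly} in $x\in\cC_1$: it fixes an arbitrary vertex, couples its $r$-neighbourhood to a Galton--Watson tree under $Q_x$, proves the approximation $Q_x[\,\cdot\mid x\text{ good}]$-a.a.s., and then takes the $\PP_{n,p}$-expectation of $\sum_x W_x$. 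Concentration of $E[|\cV^u|]$ over the graph randomness is handled by a separate second-moment argument using the two-point coupling $Q_{x,y}$ to independent trees, which is precisely the rigorous content behind your ``empirical joint distribution converges'' sentence.

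\textbf{Concentration of $|\cV^u|$.} Here the paper takes a genuinely different route: rather than expanding the variance in pairs and proving two-point decorrelation of the walk, it approximates the trajectory by a concatenation of i.i.d.\ stationary random-walk bridges of length $L=n^\delta$ (Lemma~\ref{lem:concat}) and then applies McDiarmid's bounded-differences inequality (Theorem~\ref{thm:conc}), since changing one bridge alters $|\cV^u|$ by at most $2L$. This yields the stretched-exponential bound \eqref{eq:concentration}, which is strictly stronger than Chebyshev and is actually used later (in the proof of \eqref{eq:taujump}) after losing a factor $n\log n$ when passing from stationary start to worst-case start. Your covariance approach would prove the proposition as stated, but would require the two-point excursion analysis you allude to and would not, by itself, supply the quantitative bound the paper needs downstream. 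The paper's concatenation trick is thus both simpler (it avoids any two-point walk analysis) and stronger.
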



\vspace{\baselineskip}
\subsection{Expectation of the size of the vacant set}
The proof of part (1) of Proposition~\ref{prop:sizeexpconc} is split up into several steps. We first quote and extend \cite[Proposition 11.2]{JLT12}. It formalizes the well known fact that an Erd\H{o}s-R\'enyi random graph locally looks like a Galton-Watson tree. Here, by locally we mean balls of radius of order $\log n$. More precisely, fix some $\gamma>0$ such that $6\gamma\log\rho<1$, and set
\begin{equation} r= \gamma \log n. \label{def:r} \end{equation}
For a graph $G$, a vertex $x\in G$ and a tree $\cT$ with root $\varnothing$, define the event
\begin{equation} \cI_x(G,\cT)=\left\{\parbox{9cm}{ $B(x,r+1)\subset G$ is isomorphic to $B(\varnothing,r+1)\subset \cT$, with the isomorphism sending $x$ to $\varnothing$} \right\}. \label{def:isom} \end{equation}
Denote by $\PP^0_{\cT}$ the law of the unconditioned Poisson($\rho$)-Galton-Watson tree $\cT$, and by $\{|\cT|<\infty\}$, $\{|\cT|=\infty\}$ the events of extinction and non-extinction respectively of the tree $\cT$. 

\pagebreak
\begin{proposition}~\label{prop:localtree}
\begin{enumerate}
\item Given an arbitrary fixed vertex $x\in\{1,2,...,n\}$, there is a coupling $Q_x$ of $G$ under $\PP_{n,p}$ and a tree $\cT$ under $\PP^0_{\cT}$, such that for $n$ large enough
\begin{equation} Q_x\left[\cI_x(G,\cT)\right] \geq 1-cn^{3\gamma\log\rho-1}. \label{eq:localtree} \end{equation}
For $n$ large enough, this coupling satisfies 
\begin{align} Q_x[x\in\cC_1,~|\cT|<\infty] &\leq cn^{-c'}, \label{eq:fail1} \\ Q_x[x\notin\cC_1,~|\cT|=\infty] &\leq cn^{-c'}. \label{eq:fail2} \end{align}
\item For an arbitrary point $x\in G$, with $r$ as in \eqref{def:r},
\begin{equation} \PP_{n,p}\left[|B(x,r)|\geq n^{3\gamma\log\rho}\right] \leq cn^{3\gamma\log\rho-1}. \label{eq:sizeball} \end{equation}
\item Given two arbitrary fixed vertices $x\neq y$, there is a coupling $Q_{x,y}$ of $G$ under $\PP_{n,p}$ and two trees $\cT_x$ and $\cT_y$, each having law $\PP^0_{\cT}$, such that $\cT_x$ and $\cT_y$ are independent and for $n$ large enough
\begin{equation} Q_{x,y}\left[\cI_x(G,\cT_x) \text{ and }\cI_y(G,\cT_y)\right] \geq 1-cn^{6\gamma\log\rho-1}, \label{eq:doublecoupling} \end{equation}
and statements \eqref{eq:fail1} and \eqref{eq:fail2} hold under $Q_{x,y}$ for $x$, $\cT_x$ and $y$, $\cT_y$ respectively.
\end{enumerate}
\end{proposition}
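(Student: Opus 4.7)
\textbf{The plan} is to realise the couplings $Q_x$ and $Q_{x,y}$ by simultaneously performing a breadth-first exploration of $G$ from the marked vertex (or vertices) and growing one (or two) Poisson$(\rho)$-Galton-Watson trees, matching offspring counts step by step. I would prove part~(2) first, as its conclusion lets me restrict to neighbourhoods of controlled size in parts~(1) and~(3).

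\textbf{Part (2).} The number of vertices at distance $\le r$ from $x$ in $G$ is stochastically dominated by the first $r$ generations of a Galton-Watson process with Bin$(n,\rho/n)$ offspring, whose mean is $\rho$. Its $k$-th factorial moment is of order $\rho^{rk}=n^{k\gamma\log\rho}$, so Markov's inequality applied to $|B(x,r)|^k$ gives $\PP_{n,p}[|B(x,r)|\ge n^{3\gamma\log\rho}]=O(n^{-2k\gamma\log\rho})$. Choosing $k$ a large enough fixed integer (depending only on $\gamma$ through the assumption $6\gamma\log\rho<1$) makes this smaller than $n^{3\gamma\log\rho-1}$, giving \eqref{eq:sizeball}.

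\textbf{Part (1).} Condition on the event $\{|B(x,r+1)|\le n^{3\gamma\log\rho}\}$ from part~(2). When BFS processes an active vertex $v$ with revealed vertex-set $S$, the number of neighbours of $v$ in $G\setminus S$ is Bin$(n-|S|,\rho/n)$, to be coupled with the Poisson$(\rho)$ offspring count of the corresponding vertex in $\cT$. By Le Cam's inequality (and comparing the two Poisson means) these two distributions can be coupled to agree except with probability $O((|S|+1)/n)$; on agreement the labels are matched uniformly with the unexplored vertices. Additional discrepancies come from edges of $v$ back to $S\setminus\{v\}$, which create cycles absent from $\cT$ and have probability $O(|S|/n)$ per processed vertex. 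Summing over the $O(n^{3\gamma\log\rho})$ processed vertices (and using $6\gamma\log\rho<1$ to absorb the resulting power into the stated bound) yields \eqref{eq:localtree}. For \eqref{eq:fail1}--\eqref{eq:fail2} one augments the coupling by a maximal coupling of the global indicators $\Ind{x\in\cC_1}$ and $\Ind{|\cT|=\infty}$; both have marginal probability $\xi$ up to $O(n^{-c'})$ by \eqref{prop:giantquant} and classical Galton-Watson extinction bounds, so they can be matched up to the stated error.

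\textbf{Part (3) and main obstacle.} For part~(3) I would interleave two BFS explorations, one from $x$ and one from $y$; as long as their revealed sets stay disjoint, the edges probed lie in disjoint subsets of $E(G)$, and the two offspring couplings from part~(1) can be run independently, producing independent $\cT_x$ and $\cT_y$. The probability of a collision within the first $r+1$ levels is bounded, via part~(2) and a union bound over pairs, by $|B(x,r+1)|\cdot|B(y,r+1)|/n=O(n^{6\gamma\log\rho-1})$, giving \eqref{eq:doublecoupling}. The main obstacle is \eqref{eq:fail1}--\eqref{eq:fail2}: the local isomorphism of a radius-$r$ ball carries no direct information about the global events $\{x\in\cC_1\}$ and $\{|\cT|=\infty\}$. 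The repair relies on the structural fact \eqref{prop:giant} that non-giant components in $G$ have size $O(\log n)$, so membership in $\cC_1$ can be certified by an $O(\log n)$-length continuation of the BFS, and then paired with the survival indicator of the tree via a maximal coupling of two Bernoulli$(\xi+o(1))$ variables.
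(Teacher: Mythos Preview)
Your constructions of the ball coupling for \eqref{eq:localtree}, the moment bound for \eqref{eq:sizeball}, and the two-source exploration for \eqref{eq:doublecoupling} are fine and essentially match what is done (or cited from \cite{JLT12}) in the paper. The genuine gap is in your treatment of \eqref{eq:fail1}--\eqref{eq:fail2}.

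You propose to ``augment the coupling by a maximal coupling of the global indicators $\Ind{x\in\cC_1}$ and $\Ind{|\cT|=\infty}$'', arguing that both are Bernoulli$(\xi+o(1))$. This does not work: once $G$ and $\cT$ are coupled, the indicators $\Ind{x\in\cC_1}$ and $\Ind{|\cT|=\infty}$ are deterministic functions of $G$ and $\cT$, so there is no remaining freedom to match them. If instead you mean to couple the parts of $G$ and $\cT$ outside the balls so as to match the indicators, then the relevant quantities are the \emph{conditional} laws of the indicators given the balls, and these are not Bernoulli$(\xi+o(1))$: given $B(\varnothing,r+1)$, the survival probability is $1-q^{|\partial B(\varnothing,r+1)|}$, which is $0$ when the boundary is empty and close to $1$ when it is large. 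Matching unconditional marginals tells you nothing here.

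The paper's route avoids any extra coupling. It observes that, up to errors of size $cn^{-c'}$, both global events are decided by the \emph{same local event}, namely whether $\partial B(x,r)$ (equivalently $\partial B(\varnothing,r)$ under $\cI_x$) is non-empty. On the $G$-side, $x\in\cC_1$ forces $\partial B(x,r)\neq\emptyset$ because otherwise $\cC_1\subset B(x,r)$ would be too small (via \eqref{eq:sizeball} and \eqref{prop:giantquant}); conversely $x\notin\cC_1$ forces $|C_x|=O(\log n)$, hence $|B(\varnothing,r)|=O(\log n)$ on $\cI_x$. On the $\cT$-side, conditioned on extinction the tree is subcritical, so $Z_r\geq 1$ has probability $m^{\,r}=n^{-c'}$; conditioned on survival the tree of infinite-line-of-descent particles is supercritical with extinction probability $0$, so $|B(\varnothing,r)|\leq Mr$ has probability $\leq ce^{-c'r}=cn^{-c''}$. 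Thus on $\cI_x(G,\cT)$ the two indicators already agree except on events of polynomially small probability, and no further ``maximal coupling'' is needed. Your remark that non-giant components have size $O(\log n)$ is exactly one half of this argument; what is missing is the matching statement on the tree side and the realisation that the ball isomorphism already transfers one indicator to the other.
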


\begin{proof}
\eqref{eq:localtree} is, up to the enlargement of the radius by 1, the statement of \cite[Proposition 11.2]{JLT12}, and \eqref{eq:sizeball} is \cite[Corollary 11.3]{JLT12}. Note that, in contrary to the actual statement, \cite[Proposition 11.2]{JLT12} is proved for an a priori fixed vertex and not a randomly chosen one.

For part $(1)$ it remains to show the properties \eqref{eq:fail1} and \eqref{eq:fail2}. For simplicity write $B_x=B(x,r)\subset G$ and $B_{\varnothing}=B(\varnothing,r)\subset\cT$. Denote by $\{z\leftrightarrow B_z^c\}$ the event that $z$ is connected to the complement of $B_z$, or equivalently that $\partial B_z$ is non-empty, and by $\{z\not\leftrightarrow B_z^c\}$ its complement. To prove \eqref{eq:fail1}, we first claim that
\begin{equation} \PP_{n,p}[x\in\cC_1,~x\not\leftrightarrow B_x^c ] \leq cn^{-c'}. \label{eq:xc1notxtodb} \end{equation}
To see this, note that if $x\in\cC_1$ and $x\not\leftrightarrow B_x^c$, then $B_x=\cC_1$. But by \eqref{eq:sizeball}, $B_x$ is unlikely to be large: For every small $\epsilon>0$, $\PP_{n,p}[|B_x|\geq n^{1-\epsilon}]\leq cn^{-c'}$. However, if $B_x$ is smaller than $n^{1-\epsilon}$ and $B_x=\cC_1$, then $\cC_1$ is smaller than $n^{1-\epsilon}$, but this happens with probability smaller than $cn^{-c'}$ by \eqref{prop:giantquant}, and \eqref{eq:xc1notxtodb} follows.

Note that if the coupling succeeds, i.e.~the balls of radius $r+1$ are isomorphic, then $\{x\leftrightarrow B_x^c\}=\{\varnothing\leftrightarrow B_{\varnothing}^c\}$. This happens with probability $\geq 1- cn^{-c'}$ by \eqref{eq:localtree}, so together with \eqref{eq:xc1notxtodb},
\begin{align*} Q_x[x\in\cC_1,~|\cT|<\infty] &\leq Q_x[x\leftrightarrow B_x^c,~|\cT|<\infty]+cn^{-c'} \\ &\leq Q_x[\varnothing\leftrightarrow B_{\varnothing}^c,~|\cT|<\infty]+ cn^{-c'} = \PP^0_{\cT}[\varnothing\leftrightarrow B_{\varnothing}^c,~|\cT|<\infty]+ cn^{-c'}. \end{align*}
The tree $\cT$ conditioned on extinction has the law of a subcritical Galton-Watson tree with mean offspring number $m<1$ (see e.g.~\cite[Proposition 5.26]{LP}). If $q$ is the extinction probability and $Z_k$ denotes the size of the $k$-th generation of the tree, we can use the Markov inequality to get
\begin{align*} \PP^0_{\cT}[\varnothing\leftrightarrow B_{\varnothing}^c,~|\cT|<\infty] &= \PP^0_{\cT}\left[Z_{r} \geq 1~\big|~|\cT|<\infty\right]q \\ &\leq \EE^0_{\cT}\left[Z_{r}~\big|~|\cT|<\infty\right]q = qm^{\gamma \log n} = cn^{-c'}, \end{align*}
which proves \eqref{eq:fail1}.

For \eqref{eq:fail2}, let $C_x$ be the component of $G$ containing $x$. Let $M>0$ be such that $M\gamma>(\rho-1-\log \rho)^{-1}$. Then, by e.g.~\cite[Theorem 2.6.4]{Dur10}, $\PP_{n,p}[x\notin\cC_1,~ |C_x|>M\gamma\log n]\leq cn^{-c'}$. Using this on the first line and \eqref{eq:localtree} on the second, it follows that
\begin{equation*} \begin{split} Q_x[x\notin\cC_1,~|\cT|=\infty] &\leq Q_x[|C_x|\leq M\gamma\log n,~|\cT|=\infty]+cn^{-c'} \\ &\leq Q_x[|B_{\varnothing}|\leq M\gamma\log n,~|\cT|=\infty]+cn^{-c'}. \end{split} \end{equation*}

To bound this latter probability that the ball of radius $r=\gamma\log n$ in a surviving Poisson($\rho$)-Galton-Watson tree is smaller than $Mr$, let again $Z_r$ be the size of the $r$-th generation and denote by $Z_r^{\star}$ the number of particles in the $r$-th generation with infinite line of descent. Then
\begin{align*} Q_x[|B_{\varnothing}|\leq Mr,~|\cT|=\infty] &\leq \PP_{\cT}^0 [Z_r\leq Mr \mid |\cT|=\infty]\PP_{\cT}^0 [|\cT|=\infty] \\ &\leq \PP_{\cT}^0 [Z_r^{\star}\leq Mr \mid |\cT|=\infty]\xi. \end{align*}
By e.g.~\cite[Proposition 5.26]{LP} or \cite[Theorem I.12.1]{AN72}
\begin{equation*} \PP_{\cT}^0 [Z_r^{\star}\leq Mr \mid |\cT|=\infty]=\tilde{\PP}_{\cT}[\tilde{Z}_r\leq Mr], \end{equation*}
where $\tilde{Z_r}$ under $\tilde{\PP}_{\cT}$ is the $r$-th generation size of a Galton-Watson tree with offspring distribution defined by the probability generating function $\tilde{f}$ as in \eqref{eq:tildef}, a tree with extinction probability $\tilde{q}=0$. Let $\kappa=\tilde{f}'(0)=f'(q)$. Since $f$, the probability generating function of Poisson($\rho$), is strictly convex and increasing, and by definition of $q=1-\xi$, we have $0<\kappa<1$. Let $\tilde{f}_r$ be the $r$-th iterate of $\tilde{f}$, which is in fact the probability generating function of $\tilde{Z}_r$. From \cite[Corollary I.11.1]{AN72} we know that
\begin{equation*} \lim_{r\to\infty} \kappa^{-r} \tilde{f}_r(s) = Q(s)\in(0,\infty) \quad \text{exists for } 0\leq s <1. \end{equation*}
It follows that
\begin{equation*} \tilde{f}_r(s) \leq (Q(s) + \epsilon) \kappa^r \end{equation*}
for $r\geq r_0(s,\epsilon)$. Using this, for any $\lambda>0$ we obtain for $r\geq r_0(e^{-\lambda},\epsilon)$
\begin{align*} \tilde{\PP}_{\cT}[\tilde{Z}_r\leq Mr] &\leq \tilde{\PP}_{\cT}[e^{-\lambda\tilde{Z}_r}\geq e^{-\lambda Mr}] \leq e^{\lambda Mr} \tilde{f}_r(e^{-\lambda}) \\ &\leq (Q(s) + \epsilon) e^{\lambda Mr + r\log\kappa}. \end{align*}
By choosing $\lambda < -\frac{\log\kappa}{M}$ we can make this smaller than $ce^{-c'r}$, and \eqref{eq:fail2} follows since $r=\gamma\log n$. This finishes the proof of part $(1)$ of the proposition. 

We now prove part (3). Define the coupling $Q_{x,y}$ as follows. By using part (1) of the proposition, we can find a coupling of two independent graphs $G_x$ and $G_y$, both with vertex set $x,y,3,...,n$, and two independent Poisson($\rho$)-Galton-Watson trees $\cT_x$ and $\cT_y$, such that with probability larger than $1-2cn^{3\gamma\log\rho-1}$ both $\cI_x(G_x,\cT_x)$ and $\cI_y(G_y,\cT_y)$ hold.

We then construct a graph $G$ with the same vertex set $x,y,3,...,n$ in the following way. We first explore the ball $B(x,r+1)\subset G$ by determining the state of all possible edges with at least one adjacent vertex in $B(x,r)\subset G_x$ according to their state in $G_x$, i.e.~setting them present or absent. In a second step we determine the ball $B(y,r+1)\subset G$ in the same way by $G_y$, only that we do not change the state of already determined edges. The remaining edges in $G$ are set present independently with probability $p$ and absent otherwise.

By construction this graph $G$ has law $\PP_{n,p}$. If both $\cI_x(G_x,\cT_x)$ and $\cI_y(G_y,\cT_y)$ hold and there is no collision in the second step, i.e.~we never want to set an edge present that is already set absent or vice versa, then both $\cI_x(G,\cT_x)$ and $\cI_y(G,\cT_y)$ hold, and the coupling succeeds. It thus remains to bound the probability of such a collision.

Note that if there is a collision, then the sets of vertices $B(x,r+1)$ and $B(y,r+1)$ must have non-empty intersection: If $B(x,r+1)\cap B(y,r+1)=\emptyset$, the only edges possibly causing a collision are edges $\{u,v\}$ with $u\in B(x,r)$ and $v\in B(y,r)$, but these edges must be set absent by both $G_x$ and $G_y$, or else $u\in B(y,r+1)$ or $v\in B(x,r+1)$.

The sets $B(x,r+1)$ and $B(y,r+1)$ are smaller than $n^{3\gamma\log\rho}$ with probability larger than $1-cn^{3\gamma\log\rho-1}$ by \eqref{eq:sizeball}, and they are by construction random subsets of $\{x,y,3,...,n\}$. But the probability that two random subsets of $\{x,y,3,...,n\}$ of size $k$ intersect is smaller than $\frac{k^2}{n}$, so the probability of a collision is smaller than
\begin{equation*} Q_{x,y}[B(x,r+1)\cap B(y,r+1)\neq \emptyset] \leq 2cn^{3\gamma\log\rho-1} + \frac{1}{n} n^{6\gamma\log\rho} \leq cn^{6\gamma\log\rho-1}.\end{equation*}
This proves \eqref{eq:doublecoupling}. By construction it is clear that statements \eqref{eq:fail1} and \eqref{eq:fail2} hold analogously under $Q_{x,y}$.
\end{proof}

We will denote by $\EE_{Q_x}$ and $\EE_{Q_{x,y}}$ the expectations corresponding to the couplings $Q_x$ and $Q_{x,y}$. For easier use later we now define some events and estimate their probabilities. Let $\cB_x$ on the space of the coupling $Q_x$ be the event
\begin{equation} \cB_x = \cI_x(G,\cT) \cap \Big( \{x\in\cC_1,~|\cT|=\infty\} \cup \{x\notin\cC_1,~|\cT|<\infty\} \Big), \label{def:bx} \end{equation}
This event can canonically also be defined on the space of the coupling $Q_{x,y}$ when replacing $\cT$ by $\cT_x$. Then define on the space of $Q_{x,y}$ the event
\begin{equation} \cB_{x,y} = \cB_x \cap \cB_y. \label {def:bxy} \end{equation}
From Proposition~\ref{prop:localtree} it is immediate that
\begin{align} Q_x[\cB_x]\geq 1-cn^{-c'}, \label{eq:pbx} \\ Q_{x,y}[\cB_{x,y}]\geq 1-cn^{-c'}. \label{eq:pbxy} \end{align}
On the space of the coupling $Q_x$, and similarly on the space of $Q_{x,y}$, we further define the event
\begin{equation} \{x \text{ good}\} = \{x\in\cC_1\}\cap\{|\cT|=\infty\}\cap\cI_x(G,\cT) = \cB_x\cap\{x\in\cC_1\}. \label{def:xgood} \end{equation}
Since $\PP_{\cT}^0[|\cT|=\infty]=\xi$ and $\Ind{x \text{ good}}=\Ind{|\cT|=\infty} - \Ind{|\cT|=\infty,~x\notin\cC_1}-\Ind{|\cT|=\infty,~x\in\cC_1,~\cI_x(G,\cT)^c}$, it follows with \eqref{eq:localtree} and \eqref{eq:fail2} that
\begin{equation*} Q_x[x\text{ good}]=\xi +o(1) \text{ as } n\to\infty.  \end{equation*}

Note that the probability of $x$ being good is bounded away from zero, so every graph property holding $\PP_{n,p}$-a.a.s., as well as every property of a ball of radius $r$ in a Galton-Watson tree holding $\PP_{\cT}^0$-a.a.s.~as $r\to\infty$ will also hold $Q_x[~\cdot~|~x$~good$]$-a.a.s.

As a first application of Proposition~\ref{prop:localtree} we prove a law of large numbers for the sum of degrees of vertices in the giant component, which leads to an approximation of the stationary measure $\pi$. This result may be well known, we did however not find it in the literature. The technique of the proof will be used again later.

\begin{lemma} \label{lem:sumdeg}
\begin{equation*} \sum_{x\in \cC_1} \degr(x) = \sum_{x\in G} \Ind{x\in \cC_1} \degr(x) = \rho(2-\xi)\xi n + o(n) \qquad \PP_{n,p} \text{-a.a.s.} \end{equation*}
\end{lemma}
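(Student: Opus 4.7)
The plan is to combine a first-moment computation based on the single-vertex coupling $Q_x$ with a variance bound based on the two-vertex coupling $Q_{x,y}$, and then apply Chebyshev's inequality.

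First I would compute $\EE_{n,p}\!\left[\sum_x \Ind{x\in\cC_1}\degr(x)\right]$. By linearity and by symmetry, this equals $n\,\EE_{n,p}[\Ind{x\in\cC_1}\degr(x)]$ for any fixed $x$. Under the coupling $Q_x$ of Proposition~\ref{prop:localtree}, on the event $\cB_x$ the isomorphism of balls of radius $r+1$ forces $\degr_G(x)=\degr_{\cT}(\varnothing)$, while the conjunction $\{x\in\cC_1\}\cap\{|\cT|=\infty\}\cup\{x\notin\cC_1\}\cap\{|\cT|<\infty\}$ identifies the events $\{x\in\cC_1\}$ and $\{|\cT|=\infty\}$. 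Hence
\begin{equation*}
\EE_{Q_x}\bigl[\Ind{x\in\cC_1}\degr(x);\cB_x\bigr] = \EE_{Q_x}\bigl[\Ind{|\cT|=\infty}\degr(\varnothing);\cB_x\bigr] = \EE^0_{\cT}\bigl[\Ind{|\cT|=\infty}\degr(\varnothing)\bigr] + o(1),
\end{equation*}
where the last equality absorbs the contribution of $\cB_x^c$ via Cauchy-Schwarz: $\degr(\varnothing)$ under $\PP^0_{\cT}$ is $\text{Poisson}(\rho)$ with bounded second moment, and $Q_x[\cB_x^c]=O(n^{-c'})$ by \eqref{eq:pbx}. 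The same bound handles the contribution of $\cB_x^c$ to $\EE_{Q_x}[\Ind{x\in\cC_1}\degr(x)]$ since $\degr_G(x)$ is $\text{Binomial}(n-1,\rho/n)$ with bounded second moment. It remains to evaluate $\EE^0_{\cT}[\Ind{|\cT|=\infty}\degr(\varnothing)]$. Writing $Z_1$ for the root's offspring count (Poisson$(\rho)$), conditional on $Z_1=k$ the probability of non-extinction is $1-q^k$, so $\EE^0_{\cT}[\Ind{|\cT|=\infty}Z_1]=\EE[Z_1]-\EE[Z_1 q^{Z_1}] = \rho-\rho q\cdot e^{\rho(q-1)}=\rho-\rho q^2=\rho\xi(2-\xi)$, using $q=1-\xi$ and $e^{-\rho\xi}=q$. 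This gives $\EE_{n,p}[\sum_x \Ind{x\in\cC_1}\degr(x)]=\rho(2-\xi)\xi n + o(n)$.

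Next I would bound the variance. By Chebyshev it suffices to show $\Var_{n,p}(\sum_x \Ind{x\in\cC_1}\degr(x)) = o(n^2)$. Diagonal terms contribute $n\cdot\Var(\Ind{x\in\cC_1}\degr(x))=O(n)$. For the off-diagonal terms, I would use $Q_{x,y}$: on $\cB_{x,y}$ the quantities $\Ind{x\in\cC_1}\degr(x)$ and $\Ind{y\in\cC_1}\degr(y)$ are determined by the independent trees $\cT_x,\cT_y$, so they factorize; the complement $\cB_{x,y}^c$ contributes $o(1)$ per pair by Cauchy-Schwarz (using the bounded fourth moments of $\degr_G(x)$ and $\degr_G(y)$ together with $Q_{x,y}[\cB_{x,y}^c]=O(n^{-c'})$). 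Hence each covariance is $o(1)$ uniformly in $x\neq y$, and the off-diagonal sum is $o(n^2)$.

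Putting the two steps together gives $|\sum_x \Ind{x\in\cC_1}\degr(x) - \rho(2-\xi)\xi n| = o(n)$ $\PP_{n,p}$-a.a.s. The only mildly delicate point is the uniform control of tails: we rely on the fact that integer-degree vertices have all moments of $\degr_G(x)$ bounded (because $\degr_G(x)$ is Binomial with bounded mean), so the $O(n^{-c'})$ failure probability of the coupling always dominates any moment penalty. The combinatorial heart of the argument is really just the identification of $\EE^0_{\cT}[\Ind{|\cT|=\infty}\degr(\varnothing)]$ with $\rho(2-\xi)\xi$; everything else is a routine application of the couplings already established in Proposition~\ref{prop:localtree}.
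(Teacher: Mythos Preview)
Your proposal is correct and follows essentially the same route as the paper: compute the mean via the single-vertex coupling $Q_x$, identify $\EE^0_{\cT}[\Ind{|\cT|=\infty}\degr(\varnothing)]$ with $\rho\xi(2-\xi)$, then use the two-vertex coupling $Q_{x,y}$ to show covariances are $o(1)$ and conclude by Chebyshev. The only technical difference is that the paper handles the tail contribution from $\cB_x^c$ by truncating degrees at $\log n$ and invoking the a.a.s.\ bound $\Delta_G\le\log n$, whereas you use Cauchy--Schwarz with the bounded moments of the Binomial and Poisson degrees; both are valid and amount to the same thing.
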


\begin{proof}
Every vertex in the random graph $G$ has Binomial($n-1,\frac{\rho}{n}$) neighbours, but on $\cC_1$ their degree is above average and there is some dependency. For $x\in G$ denote 
\begin{align*} Z_x & =\Ind{x\in \cC_1} \degr(x), \\ \tilde{Z}_x &= \Ind{|\cT|=\infty} \degr(\varnothing), \end{align*}
where the tree $\cT$ is defined by the coupling $Q_x$ from Proposition~\ref{prop:localtree}, and $\varnothing$ is the root of $\cT$. We will approximate $\EE_{n,p}[Z_x]=\EE_{Q_x}[Z_x]$ by $\EE_{Q_x}[\tilde{Z}_x]$ and show that the sum of the $Z_x$ is concentrated around its expectation using the second moment method.

Let us first compute the expectation of $\tilde{Z}_x$. Recall that $\PP_{\cT}$ denotes the law of the Poisson($\rho$)-Galton-Watson tree conditioned on non-extinction, and $\EE_{\cT}$ the corresponding conditional expectation. Then
\begin{equation} \EE_{Q_x}\left[\tilde{Z}_x\right] = \EE_{\cT}^0\left[\degr(\varnothing)~\big|~|\cT|=\infty\right]\PP^0_{\cT}\left[|\cT|=\infty\right] = \EE_{\cT}\left[\degr(\varnothing)\right]\xi. \label{eq:exptildez}\end{equation}
Using the same technique as in the proof of \cite[Proposition 5.26]{LP}, it is straightforward to see that the expected offspring in a Galton-Watson tree conditioned on non-exctinction is 
\begin{equation*} \EE_{\cT}[\degr(\varnothing)] = \frac{1}{1-q}(f'(1) - qf'(q)), \end{equation*}
where $f$ is the probability generating function of the offspring distribution. Here, the offspring is Poisson($\rho$), so $q=1-\xi$, $f'(1)=\rho$ and $f'(q)=\rho(1-\xi)$, which leads to
\begin{equation} \EE_{\cT}\left[\degr(\varnothing)\right] = \frac{1}{\xi}(\rho-\rho(1-\xi)^2) = \rho(2-\xi). \label{eq:expdegnonext} \end{equation}

We now approximate $\EE_{Q_x}[Z_x]$ by $\EE_{Q_x}[\tilde{Z}_x]$. Because $\tilde{Z}_x$ is unbounded, we will truncate it by $\log n$. By definition $\tilde{Z}_x$ is stochastically dominated by a Poisson($\rho$)-random variable $\Lambda$, in particular it has finite mean, and therefore $\EE_{Q_x}[\tilde{Z}_x \Ind{\tilde{Z}_x <\log n}] \nearrow \EE_{Q_x}[\tilde{Z}_x]$ as $n\to\infty$. Using $E[e^{t\Lambda}] = e^{\rho(e^t-1)}$ we have $P[\Lambda \geq \log n] = P[e^{t\Lambda}\geq n^t] \leq  e^{\rho(e^t-1)}n^{-t}=cn^{-c'}$. It follows that
\begin{align*}\EE_{Q_x}[\tilde{Z}_x\wedge\log n] &= \EE_{Q_x}[\tilde{Z}_x \Ind{\tilde{Z}_x <\log n}] + \log n Q_x[\tilde{Z}_x \geq \log n] \\ &= \EE_{Q_x}[\tilde{Z}_x] + o(1)\text{ as }n\to\infty.\end{align*}

Recall from \eqref{def:bx} the definition of the event $\cB_x$, on which $Z_x=\tilde{Z}_x$, and $Z_x=\tilde{Z}_x\wedge\log n$ if $\Delta_G \leq\log n$. With \eqref{eq:pbx} and \eqref{eq:pmaxdeg} we can bound
\begin{align} \left| \EE_{Q_x}[Z_x] - \EE_{Q_x}[\tilde{Z}_x\wedge\log n] \right| &\leq n Q_x\left[\Delta_G>\log n\right] + \log n Q_x\left[\cB_x^c\right] \leq cn^{-c'}. \label{eq:zfirsttilde} \end{align}
With \eqref{eq:exptildez} and \eqref{eq:expdegnonext} it follows that
\begin{equation*} \EE_{n,p}\left[\sum_{x\in G}Z_x\right] = n \EE_{Q_x}\left[Z_x\right] = \rho(2-\xi)\xi n + o(n) \text{ as } n\to\infty. \end{equation*}

It remains to show that the sum of the $Z_x$ is concentrated. Take $x\neq y$ arbitrary vertices in $G$ and consider the coupling $Q_{x,y}$ from Proposition~\ref{prop:localtree}. Recall from \eqref{def:bxy} the definition of the event $\cB_{x,y}$. On $\cB_{x,y}$ we have $Z_x=\tilde{Z}_x$ and $Z_y=\tilde{Z}_y$, so with \eqref{eq:pbxy} and \eqref{eq:pmaxdeg} we get
\begin{equation} \begin{split} &\left| \EE_{Q_{x,y}}[Z_xZ_y] - \EE_{Q_{x,y}}\left[(\tilde{Z}_x\wedge\log n)(\tilde{Z}_y\wedge\log n)\right] \right| \\ &\qquad \leq n^2 Q_{x,y}\left[\Delta_G>\log n\right] + \log^2 n Q_{x,y}\left[\cB_{x,y}^c\right] \leq cn^{-c'}. \label{eq:zsecondtilde} \end{split} \end{equation}
The trees $\cT_x$ and $\cT_y$ are independent, so $\tilde{Z}_x\wedge\log n$ and $\tilde{Z}_y\wedge\log n$ are independent. Therefore, from \eqref{eq:zfirsttilde} and \eqref{eq:zsecondtilde} we conclude that for two arbitrary vertices $x\neq y$,
\begin{equation*} \EE_{n,p}[Z_x Z_y] = \EE_{n,p}[Z_x]\EE_{n,p}[Z_y] + o(1) \text{ as } n\to\infty. \end{equation*}
Denote $Z=\sum_{x\in G} Z_x$. It follows from the above, together with \eqref{eq:pmaxdeg}, that
\begin{align*} \EE_{n,p}\left[Z^2\right] &= \sum_{x\in G} \EE_{n,p}[Z_x^2] + \sum_{x\neq y} \left(\EE_{n,p}[Z_x]\EE_{n,p}[Z_y] + o(1)\right) \\ &= O(n\log^2 n) + O(n^3) \PP_{n,p}[\Delta_G>\log n] +  \EE_{n,p}\left[Z\right]^2 - n \EE_{n,p}[Z_x]^2 + o(n^2) \\ &= \EE_{n,p}\left[Z\right]^2 + o(n^2) \text{ as } n\to\infty. \end{align*}
Thus $\Var{Z}=o(n^2)$ and the Chebyshev inequality implies for any $\epsilon>0$
\begin{equation*} \PP_{n,p}\left[|Z-\EE_{n,p}[Z]|>\epsilon n \right] = o(1)  \text{ as } n\to\infty. \end{equation*}
This finishes the proof of the lemma.
\end{proof}

We proceed with the proof of part $(1)$ of Proposition~\ref{prop:sizeexpconc}, i.e.~the computation of $E[|\cV^{u}|]$. First observe that
\begin{equation*} E[|\cV^{u}|] = \sum_{x\in \cC_1} P[x \text{ is vacant at time }u\rho(2-\xi)\xi n] = \sum_{x\in \cC_1} P[H_x > u\rho(2-\xi)\xi n]. \end{equation*}
The task is therefore to approximate the probabilites $P[H_x > u\rho(2-\xi)\xi n]$.

Assume that the random walk $X$ is the discrete skeleton of a simple continuous-time random walk $X^c$, i.e.~the times between jumps of $X^c$ are i.i.d.~Exponential(1). Denote by $H^c_x$ the entrance time of $x$ for this continuous-time walk and by $S_k$ the time of the $k$-th jump. It is clear that $E[S_k]=k$ and $E[H^c_x]=E[H_x]$. From \cite{AB92} or \cite[Chapter 3, Proposition 23]{AFb} we know that the distribution of the entrance time of such a continuous-time walk can be approximated by an exponential distribution, namely for all $t>0$
\begin{equation} \left| P[H^c_x > t] - e^{-\frac{t}{E[H_x]}}\right| \leq \frac{1}{\lambda_{\cC_1}E[H_x]}. \label{eq:expapprox0} \end{equation}
If $k=k(n)\to\infty$ as $n\to\infty$, by the law of large numbers $P[|S_k-k|>\epsilon k]=o(1)$ as $n\to\infty$ for all $\epsilon >0$. This implies
\begin{align*} P[H_x>k]=P[H^c_x>S_k] &= P[H^c_x>S_k,~S_k\geq (1-\epsilon) k] +P[H^c_x>S_k,~S_k<(1-\epsilon) k] \\ &\leq P[H^c_x >(1-\epsilon)k]+o(1) \mbox{ as $n\to\infty$ for all $\epsilon >0$,}\end{align*}
and similarly
\begin{equation*} P[H_x>k] \geq P[H^c_x >(1+\epsilon)k] +o(1) \mbox{ as $n\to\infty$ for all $\epsilon >0$.} \end{equation*}
We obtain $P[H_x>k]=P[H^c_x>k]+o(1)$ as $n\to\infty$, and together with the bounds \eqref{prop:gap} for $\lambda_{\cC_1}$ and \eqref{eq:epihl} for $E[H_x]$ it follows from \eqref{eq:expapprox0} that $\PP_{n,p}$-a.a.s.
\begin{equation} \left| P[H_x > u\rho(2-\xi)\xi n] - e^{-\frac{u\rho(2-\xi)\xi n}{E[H_x]}}\right| = o(1). \label{eq:expapprox1} \end{equation}

Approximating the probabilities $P[H_x > u\rho(2-\xi)\xi n]$ therefore reduces to the investigation of $E[H_x]$. We will use Proposition 3.2 from \cite{CTW11}, which states that $E[H_x]$ can be approximated in terms of the Dirichlet form of the equilibrium potential $g^{\star}$ (cf.~\eqref{eq:gstar} and \eqref{eq:Dgstar}). 

\begin{proposition}{\cite[Proposition 3.2]{CTW11}} \label{prop:ctw}
\begin{equation} \cD(g^{\star},g^{\star}) \left(1-2\sup_{y\in B(x,r)^c} |f^{\star}(y)|\right) \leq \frac{1}{E[H_x]} \leq \cD(g^{\star},g^{\star}) \frac{1}{\pi(B(x,r)^c)^2},  \label{eq:ctw} \end{equation}
where $f^{\star}(y) = 1 - \frac{E_y[H_x]}{E[H_x]}$.
\end{proposition}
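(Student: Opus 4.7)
My plan is to realize $1/E[H_x]$ itself as a Dirichlet form, and then compare the resulting energy with $\cD(g^{\star},g^{\star})$ by a Cauchy--Schwarz argument in one direction and by an orthogonality-style expansion in the other.

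First I would establish the key identity $\cD(f^{\star},f^{\star}) = 1/E[H_x]$. The Poisson equation $E_y[H_x] = 1 + \sum_z p_{yz} E_z[H_x]$ for $y \neq x$ gives $L f^{\star}(y) = 1/E[H_x]$ for every $y \neq x$, while Kac's formula $E_x[\tilde H_x] = 1/\pi(x)$ gives $L f^{\star}(x) = -(1-\pi(x))/(\pi(x) E[H_x])$. Plugging into $\cD(f^{\star},f^{\star}) = -\langle f^{\star}, L f^{\star}\rangle_\pi$ and using $f^{\star}(x)=1$ together with $\pi(f^{\star}) = 0$ causes the terms involving $1-\pi(x)$ and $\pi(x)$ to telescope, yielding the identity. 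An identical computation, with $g^{\star}$ replacing $f^{\star}$ in one slot and using the explicit form of $Lf^{\star}$, produces the auxiliary identity $\cD(g^{\star},f^{\star}) = (1-\pi(g^{\star}))/E[H_x]$.

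For the upper bound I would use Cauchy--Schwarz for the bilinear form $\cD$: $\cD(g^{\star},f^{\star})^2 \leq \cD(g^{\star},g^{\star}) \cdot \cD(f^{\star},f^{\star}) = \cD(g^{\star},g^{\star})/E[H_x]$. Since $g^{\star}$ vanishes on $B(x,r)^c$ and is bounded by $1$ elsewhere, $\pi(g^{\star}) \leq \pi(B(x,r))$, so the auxiliary identity gives $\cD(g^{\star},f^{\star}) \geq \pi(B(x,r)^c)/E[H_x]$. Combining the two inequalities and rearranging gives exactly $1/E[H_x] \leq \cD(g^{\star},g^{\star})/\pi(B(x,r)^c)^2$.

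For the lower bound I would expand
\[
\cD(f^{\star},f^{\star}) = \cD(g^{\star},g^{\star}) + 2\cD(g^{\star},f^{\star}-g^{\star}) + \cD(f^{\star}-g^{\star},f^{\star}-g^{\star}) \geq \cD(g^{\star},g^{\star}) + 2\cD(g^{\star},f^{\star}-g^{\star})
\]
and bound the cross-term by hand. Writing $\cD(g^{\star}, f^{\star}-g^{\star}) = -\langle f^{\star}-g^{\star}, L g^{\star}\rangle_\pi$, the harmonicity $L g^{\star} \equiv 0$ on $B(x,r)\setminus\{x\}$ together with $(f^{\star}-g^{\star})(x) = 0$ annihilates everything except the sum over $B(x,r)^c$, on which $g^{\star}=0$. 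That remaining sum is bounded in absolute value by $\sup_{B(x,r)^c}|f^{\star}|$ times $\sum_{y\in B(x,r)^c}\pi(y)Lg^{\star}(y)$; stationarity $\sum_y \pi(y)Lg^{\star}(y)=0$ together with \eqref{eq:Dgstar} identifies this last factor as $\cD(g^{\star},g^{\star})$. Since the statement is quoted from \cite{CTW11} I do not expect a serious obstacle; the only delicate bookkeeping is tracking on which side of $\langle\cdot,\cdot\rangle_\pi$ to place $L$ in each computation so that the known values of $Lf^{\star}$ and $Lg^{\star}$ can be invoked directly.
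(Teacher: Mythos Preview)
Your argument is correct. The key identity $\cD(f^{\star},f^{\star})=1/E[H_x]$ and the auxiliary identity $\cD(g^{\star},f^{\star})=(1-\pi(g^{\star}))/E[H_x]$ both check out exactly as you outline, the Cauchy--Schwarz step gives the upper bound cleanly (note $\cD(g^{\star},f^{\star})\ge 0$ so there is no sign issue when squaring), and in the lower bound the sign observation $Lg^{\star}\ge 0$ on $B(x,r)^c$ together with $\sum_y\pi(y)Lg^{\star}(y)=0$ and \eqref{eq:Dgstar} indeed identifies the cross-term bound with $\cD(g^{\star},g^{\star})$.

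As for comparison with the paper: there is nothing to compare against. The present paper does not prove this proposition; it is quoted verbatim from \cite[Proposition~3.2]{CTW11} and used as a black box. Your write-up is essentially the standard Dirichlet-form proof and is, as far as one can tell without that reference in hand, the argument one would expect to find there. One cosmetic remark: since the chain is reversible, $L$ is $\pi$-self-adjoint, so the ``delicate bookkeeping'' you flag about which slot of $\langle\cdot,\cdot\rangle_\pi$ carries $L$ is not actually an issue.
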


To use this result, we need to control the function $f^{\star}$. To this end, we give in the next lemma a bound on the probability that the random walk on $\cC_1$ started outside $B(x,r)$ hits $x$ before some time $T$. Recall the coupling $Q_x$ from Proposition~\ref{prop:localtree}, the definition \eqref{def:xgood} of the event $\{x$~good$\}$, and the definition \eqref{def:r} of the radius $r$.

\begin{lemma} \label{lem:pH2}
There is a constant $c$, such that, for $T\in\NN$ possibly depending on $n$, 
\begin{equation*} Q_x\left[ \sup_{y\in B(x,r)^c}P_y[H_x\leq T] \leq T e^{-c r} ~\bigg|~x\text{ good}\right] \to1 \text{ as } n\to\infty. \end{equation*}
\end{lemma}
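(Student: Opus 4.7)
The strategy has two logical stages: reducing the statement to a bound on a local hitting probability inside $B(x,r)$, and then transferring that bound to the Galton--Watson tree via the isomorphism that comes with $\{x\text{ good}\}$.

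For the reduction, fix $y\in B(x,r)^c$ and let $\tau_1<\tau_2<\cdots$ be the successive times at which the walk enters $B(x,r)$. Each round trip out of and back into $B(x,r)$ takes at least two steps, so the number $N_T$ of such entries by time $T$ satisfies $N_T\leq T/2+1\leq T$. Each entry point $X_{\tau_k}$ lies in the inner boundary $\partial B(x,r)$, and by the strong Markov property the conditional probability of hitting $x$ during the subsequent visit to $B(x,r)$ equals $g^{\star}(X_{\tau_k})$, with $g^{\star}$ as in \eqref{eq:gstar}. Writing $M:=\max_{z\in\partial B(x,r)}g^{\star}(z)$, a union bound over $k$ gives
$$\sup_{y\in B(x,r)^c}P_y[H_x\leq T]\;\leq\; E_y[N_T]\cdot M\;\leq\; T\cdot M,$$
so it suffices to prove $M\leq e^{-cr}$ with $Q_x[\,\cdot\mid x\text{ good}]$-probability tending to $1$.

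On $\{x\text{ good}\}$ the isomorphism $\cI_x(G,\cT)$ maps $B(x,r+1)\subset G$ onto $B(\varnothing,r+1)\subset\cT$. Since every vertex in $B(x,r)$ has all its neighbors inside $B(x,r+1)$, this isomorphism preserves all degrees, and hence all one-step transition probabilities, throughout $B(x,r)$. Consequently $M$ has, under $Q_x[\,\cdot\mid x\text{ good}]$, the same distribution as
$$M':=\max_{z'\in\partial B(\varnothing,r)}P_{z'}^{\cT}\bigl[H_\varnothing<H_{B(\varnothing,r)^c}\bigr]$$
under $\PP^0_{\cT}[\,\cdot\mid|\cT|=\infty]$, and the problem is reduced to controlling $M'$ on the tree.

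For this last step, the plan is to relax to the infinite-tree hitting probability $P_{z'}^{\cT}[H_\varnothing<\infty]$, and then to apply the strong Markov property successively at $H_{v_{r-1}},H_{v_{r-2}},\ldots,H_{v_1}$ along the unique tree path $\varnothing=v_0,v_1,\ldots,v_r=z'$, using that on a tree every walk from $z'$ to $\varnothing$ must pass through each $v_k$. This yields the factorization
$$P_{z'}^{\cT}[H_\varnothing<\infty]=\prod_{k=1}^{r}P_{v_k}^{\cT}\bigl[H_{v_{k-1}}<\infty\bigr].$$
Each factor is the return-to-parent probability $\beta_{v_k}$ in the subtree rooted at $v_k$ and obeys the recursion $\beta_v=1/(\deg(v)-\sum_{u\text{ child of }v}\beta_u)$. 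Since $\cT$ is supercritical and simple random walk on a supercritical Poisson--Galton--Watson tree is a.s.\ transient with positive speed, the typical $\beta_v$ is bounded by some $\alpha=\alpha(\rho)<1$. A quantitative large-deviation estimate for the $\beta_v$'s along the path, combined with a union bound over the at most $|B(\varnothing,r)|\leq n^{3\gamma\log\rho}$ depth-$r$ candidates $z'$ (from \eqref{eq:sizeball}), then yields $M'\leq\alpha^r=e^{-cr}$ with high probability, with $c=-\log\alpha>0$. The main obstacle is precisely this uniform tree estimate: since the subtrees at $v_1,\ldots,v_r$ are nested the factors $\beta_{v_k}$ are dependent, and one has to rule out atypical subtrees along the path on which the return probability is close to $1$, all this on the conditioned (size-biased) tree; everything outside this step is a routine application of strong Markov and the coupling of Proposition~\ref{prop:localtree}.
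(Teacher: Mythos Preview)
Your reduction and use of the coupling are exactly how the paper proceeds: first bound $\sup_{y\in B(x,r)^c}P_y[H_x\leq T]$ by $T\cdot M$ with $M=\max_{z\in\partial B(x,r)}P_z[H_x<H_{B(x,r)^c}]$ via the strong Markov property at successive entries into $B(x,r)$ (at most $T$ of them), then use the isomorphism from $\{x\text{ good}\}$ to identify $M$ with its tree analogue $M'=\max_{z'\in\partial B(\varnothing,r)}P_{z'}^{\cT}[H_\varnothing<H_{B(\varnothing,r)^c}]$. The only substantive difference concerns the tree estimate $M'\leq e^{-cr}$, which you flag as the main obstacle and sketch via the factorization $P_{z'}^{\cT}[H_\varnothing<\infty]=\prod_k\beta_{v_k}$ and a large-deviation argument for the nested, dependent $\beta_{v_k}$. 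The paper does not carry out this step at all: it simply invokes \cite[Proposition~11.5]{JLT12}, which states precisely that $\sup_{w\in\partial B(\varnothing,r)}P_w^{\cT}[H_\varnothing<\infty]\leq e^{-cr}$ holds $\PP^0_{\cT}$-a.a.s.\ as $r\to\infty$, and then transfers this to $Q_x[\,\cdot\mid x\text{ good}]$ by the observation that $Q_x[x\text{ good}]$ is bounded away from zero. So what you correctly isolate as the hard part is handled in the paper by a black-box citation; your outline of a direct proof is plausible but, as you yourself note, not complete.

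One small imprecision: you assert that $M$ under $Q_x[\,\cdot\mid x\text{ good}]$ has the same \emph{distribution} as $M'$ under $\PP^0_{\cT}[\,\cdot\mid|\cT|=\infty]$. This is not quite right, since $\{x\text{ good}\}$ also imposes $\cI_x(G,\cT)$ and $\{x\in\cC_1\}$, which could in principle bias the tree. What is true, and sufficient, is that on $\cI_x(G,\cT)$ one has $M=M'$ pointwise under the coupling, and since $Q_x[x\text{ good}]\geq c>0$, any $\PP^0_{\cT}$-a.a.s.\ bound on $M'$ transfers to a $Q_x[\,\cdot\mid x\text{ good}]$-a.a.s.\ bound on $M$. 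The paper argues exactly this way.
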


\begin{proof}
For $x$ good let $\cT$ be the infinite Poisson($\rho$)-Galton-Watson tree defined by the coupling $Q_x$ to which the neighbourhood of $x$ is isomorphic. Let $P^{\cT}_w$ be the law of the simple random walk on the tree $\cT$ started at $w\in\cT$. To bound the escape probability of random walk on a Galton-Watson tree we use \cite[Proposition 11.5]{JLT12}, which states that
\begin{equation*} \sup_{w\in\partial B(\varnothing,r)} P^{\cT}_w\left[H_{\varnothing} < \infty\right] \leq e^{-cr} \qquad \PP^0_{\cT}\text{-a.a.s.~as } r\to\infty. \end{equation*}
Since $P^{\cT}_w[H_{\varnothing} < \infty] \geq P^{\cT}_w[H_{\varnothing} < H_{B(\varnothing,r)^c}]$, this implies
\begin{equation*} \sup_{w\in\partial B(\varnothing,r)} P^{\cT}_w[H_{\varnothing} <  H_{B(\varnothing,r)^c}] \leq e^{-cr} \qquad \PP_{\cT}^0\text{-a.a.s.~as } r\to\infty. \end{equation*}
As argued before, since $Q_x[x$~good$]$ is bounded away from zero, this also holds $Q_x[~\cdot~|~x$~good$]$-a.a.s. For $x$ good, $P^{\cT}_w[H_{\varnothing} <  H_{B(\varnothing,r)^c}]=P_z[H_x<H_{B(x,r)^c}]$, where $z\in\partial B(x,r)$ is the image of $w$ under the isomorphism between $B(x,r+1)\subset G$ and $B(\varnothing,r+1)\subset\cT$. It follows that
\begin{equation*} Q_x\left[ \sup_{z\in\partial B(x,r)}P_z\left[H_x<H_{B(x,r)^c}\right]\leq e^{-cr} ~\bigg|~x\text{ good}\right] \to1 \text{ as } n\to\infty. \end{equation*}
On the way from $y\in B(x,r)^c$ to $x$, the random walk on $\cC_1$ must visit some $z\in\partial B(x,r)$. From there it either reaches $x$ or leaves $B(x,r)$ again. The probability of the first event is $Q_x[~\cdot~|~x$~good$]$-a.a.s.~bounded by $e^{-cr}$, and if the second event occurs, we can repeat the previous reasoning. But in time $T$, this procedure can be repeated at most $T$ times, leading to the required bound on $P_y[H_x\leq T]$.
\end{proof}

With \lemref{lem:pH2} we can give a bound on $\sup_{y\in B(x,r)^c}\left|f^{\star}(y)\right|$ on the left hand side of \eqref{eq:ctw}.

\begin{lemma} \label{lem:bfstar}
There are constants $c$, $c'$, such that
\begin{equation}  Q_x\left[ \sup_{y\in B(x,r)^c} \left|1 - \frac{E_y[H_x]}{E[H_x]}\right|  \leq cn^{-c'} ~\bigg|~x\text{ good}\right] \to1 \text{ as } n\to\infty.  \label{eq:bfstar} \end{equation}
\end{lemma}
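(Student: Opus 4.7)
The plan is to show that $|E_y[H_x] - E[H_x]|$ is polynomially smaller than $n$, uniformly in $y \in B(x,r)^c$, and then divide by the lower bound $E[H_x] \geq cn/\log n$ from \eqref{eq:epihl}. Fix a polylogarithmic time horizon $T = \log^4 n$, chosen so that the mixing bound \eqref{eq:mix} puts the law of $X_T$ super-polynomially close to $\pi$ (since $\lambda_{\cC_1} T \geq c\log^2 n$), while $Te^{-cr}$ from \lemref{lem:pH2} remains polynomially small. The Markov property at time $T$ gives
\begin{equation*}
E_\nu[H_x] = E_\nu[H_x \wedge T] + \sum_{z \in \cC_1} P_\nu[X_T = z,\,H_x > T]\,E_z[H_x]
\end{equation*}
for $\nu = \delta_y$ and $\nu = \pi$; subtracting and using $|E_y[H_x \wedge T] - E[H_x \wedge T]| \leq T$ yields
\begin{equation*}
|E_y[H_x] - E[H_x]| \leq T + M_x \cdot \Delta_y,
\end{equation*}
with $M_x := \max_{z\in\cC_1} E_z[H_x]$ and $\Delta_y := \sum_z |P_y[X_T=z,H_x>T] - P[X_T=z,H_x>T]|$.

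Next I would bound $\Delta_y \leq 2\|P_y[X_T \in \cdot] - \pi\|_{\mathrm{TV}} + P_y[H_x \leq T] + P[H_x \leq T]$ uniformly in $y \in B(x,r)^c$. The TV distance is super-polynomially small by summing \eqref{eq:mix} over $\cC_1$. The term $P_y[H_x \leq T]$ is $O(n^{-c'})$ by \lemref{lem:pH2}. For the last term, split $P[H_x \leq T] \leq \pi(B(x,r)) + \sup_{z \in B(x,r)^c} P_z[H_x \leq T]$; the first piece is polynomially small by combining \eqref{eq:sizeball} with \eqref{eq:piu} and the choice $3\gamma\log\rho < 1/2$, the second by \lemref{lem:pH2} once more. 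Hence $\Delta_y = O(n^{-c''})$ uniformly.

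The remaining ingredient is an upper bound $M_x \leq n (\log n)^{O(1)}$. This follows from a standard mixing argument: for $T = \log^4 n$, the bound \eqref{eq:mix} combined with $\pi(x) \geq c/(n\log n)$ from \eqref{eq:pil} forces $P_z[X_{T} = x] \geq \pi(x)/2$ for every $z \in \cC_1$, so iterating blocks of length $T$ yields the geometric tail $P_z[H_x > kT] \leq (1-\pi(x)/2)^k$, and hence $E_z[H_x] \leq 2T/\pi(x) = O(n \log^5 n)$.

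Combining, $|E_y[H_x] - E[H_x]| \leq T + O(n\log^5 n) \cdot O(n^{-c''}) = O(n^{1-c''}\log^5 n)$ uniformly in $y \in B(x,r)^c$, and dividing by $E[H_x] \geq cn/\log n$ produces \eqref{eq:bfstar} for any $c' < c''$. The main obstacle is controlling all four error sources simultaneously, namely the mixing error, the escape probabilities from \lemref{lem:pH2}, the stationary mass of the ball $B(x,r)$, and the maximum hitting time $M_x$, within a single polynomial error bound after dividing by the somewhat delicate $1/\log n$-type lower bound on $E[H_x]$.
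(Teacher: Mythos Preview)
Your argument is correct and follows essentially the same strategy as the paper: apply the Markov property at the polylogarithmic time $T=\log^4 n$, use the mixing bound \eqref{eq:mix} to make the law of $X_T$ close to $\pi$, and invoke \lemref{lem:pH2} to control the probability of hitting $x$ before time $T$ from outside $B(x,r)$.

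The bookkeeping differs in two minor ways. First, the paper bounds $M_x=\max_z E_z[H_x]$ crudely by the $O(n^3)$ cover-time estimate and lets the super-polynomial mixing factor $e^{-\lambda_{\cC_1}T}$ absorb it; your geometric-tail argument yielding $M_x=O(n\log^5 n)$ is sharper but not needed. Second, the paper treats the two sides of \eqref{eq:bfstar} asymmetrically: it first proves $E_z[H_x]\le T+E[H_x]+cn^{-c'}$ for \emph{all} $z\in\cC_1$, and then feeds this back into the lower bound to replace $\sup_z E_z[H_x]$ by $T+E[H_x]$. This avoids your extra step of bounding $P[H_x\le T]$ via $\pi(B(x,r))$. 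Either route closes the argument with the same polynomial error.
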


\begin{proof}
Note first that by the general $O(k^3)$-bound on the expected cover time $C_G$ of a graph $G$ on $k$ vertices (see e.g.~\cite{Al79}), we have
\begin{equation} \sup_{z\in \cC_1} E_z[H_x] \leq C_{\cC_1} \leq n^3. \label{eq:bsupeh} \end{equation}

Before considering the expectation of $H_x$ with the random walk started from $y\in B(x,r)^c$, we consider the expectation of $H_x$ starting from $X_T$ for some time $T$ where the walk is well mixed. Set $T=\log^4n$. With \eqref{eq:mix}, \eqref{prop:gap}, \eqref{eq:pil} and \eqref{eq:bsupeh} we get $\PP_{n,p}$-a.a.s.~for all $z\in \cC_1$
\begin{align} \big|E_z[E_{X_T}[H_x]] - E[H_x]\big| &\leq \sum_{z'\in \cC_1} \big|P_z[X_T = z'] - \pi(z')\big| E_{z'}[H_x] \notag \\
																						&\leq \sum_{z'\in \cC_1} \frac{1}{\min_{v\in \cC_1} \pi(v)} e^{-\lambda_{\cC_1}T} E_{z'}[H_x] \label{eq:ehxt} \\
																						&\leq c n^5 \log n e^{-c'\log^2n} \leq cn^{-c'}. \notag \end{align}
By the Markov property at time $T$ and using \eqref{eq:ehxt}, $\PP_{n,p}$-a.a.s.
\begin{equation} E_z[H_x] \leq T + E_z[E_{X_T}[H_x]] \leq T + E[H_x] + cn^{-c'}. \label{eq:bfstar0} \end{equation}
With \eqref{eq:epihl} it follows that $\PP_{n,p}$-a.a.s.~for all $z\in\cC_1$
\begin{equation} \frac{E_z[H_x]}{E[H_x]} - 1 \leq (T + cn^{-c'}) \frac{1}{E[H_x]} \leq cn^{-c'}. \label{eq:bfstar1} \end{equation}
Since everything holding $\PP_{n,p}$-a.a.s.~also holds $Q_x[~\cdot~|~x$~good$]$-a.a.s., \eqref{eq:bfstar1} is enough for one side of \eqref{eq:bfstar}.

For the other side take now $y\in B(x,r)^c$ and apply the Markov property at time $T$, use \eqref{eq:ehxt} on the first line and \eqref{eq:bfstar0} for the supremum on the second line to get $\PP_{n,p}$-a.a.s.
\begin{align*} E_y[H_x] &\geq E_y[\Ind{H_x > T}E_{X_T}[H_x]] = E_y[E_{X_T}[H_x]] - E_y[\Ind{H_x \leq T} E_{X_T}[H_x]] \\
												&\geq E[H_x] - cn^{-c'} - P_y[H_x\leq T] \sup_{z\in \cC_1} E_z[H_x] \\
												&\geq E[H_x] - 2 cn^{-c'} - P_y[H_x\leq T] (T + E[H_x]). \end{align*}
This holds $\PP_{n,p}$-a.s.s., so as argued before it also holds $Q_x[~\cdot~|~x$~good$]$-a.a.s. With the bound \eqref{eq:epihl} and using \lemref{lem:pH2}, where we note that $e^{-cr} = n^{-c'}$ by \eqref{def:r}, it follows that $Q_x[~\cdot~|~x$~good$]$-a.a.s.
\begin{equation*} \frac{E_y[H_x]}{E[H_x]} - 1 \geq - cn^{-1-c'}\log n - \log^4n e^{-c'' r} \left(\frac{c'''\log^5n}{n} + 1\right)  \geq -cn^{-c'}. \end{equation*}
Together with \eqref{eq:bfstar1} this proves the lemma.
\end{proof}

Applying \lemref{lem:bfstar} in \eqref{eq:ctw} and using \lemref{lem:sumdeg}, we obtain the following approximation of the probabilites $P[H_x>u\rho(2-\xi)\xi n]$.

\begin{lemma} \label{lem:exapprox2}
For any fixed $u>0$ and every $\epsilon>0$,
\begin{equation*} Q_x\left[ \left| P\left[H_x > u\rho(2-\xi)\xi n\right] - e^{-u P^{\cT}_{\varnothing}\left[\tilde{H}_{\varnothing} > H_{B(\varnothing,r)^c}\right] \degr(\varnothing)} \right|\leq \epsilon ~\bigg|~x \text{ good}\right] \to 1 \text{ as } n\to\infty.  \end{equation*}
\end{lemma}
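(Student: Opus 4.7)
The plan is to combine the exponential approximation \eqref{eq:expapprox1} of the hitting-time distribution with the Dirichlet-form sandwich bound of Proposition~\ref{prop:ctw}, and then transport the resulting expressions from $\cC_1$ to the limit tree $\cT$ using the isomorphism of balls guaranteed on $\{x\text{ good}\}$.

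First, by \eqref{eq:expapprox1}, which holds $\PP_{n,p}$-a.a.s.\ and hence $Q_x[\,\cdot\mid x\text{ good}]$-a.a.s.,
$$P[H_x > u\rho(2-\xi)\xi n] = \exp\!\bigl(-u\rho(2-\xi)\xi n / E[H_x]\bigr) + o(1),$$
so by continuity of the exponential it suffices to approximate the exponent $u\rho(2-\xi)\xi n/E[H_x]$ by $u P^{\cT}_{\varnothing}[\tilde H_{\varnothing} > H_{B(\varnothing,r)^c}]\degr(\varnothing)$ up to an $o(1)$ additive error (on an event of high probability on which the exponent stays bounded). Proposition~\ref{prop:ctw} gives the sandwich $\cD(g^{\star},g^{\star})(1-2\sup_y|f^{\star}(y)|) \leq 1/E[H_x] \leq \cD(g^{\star},g^{\star})/\pi(B(x,r)^c)^2$. \lemref{lem:bfstar} forces the lower-bound factor to be $1-o(1)$. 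For the upper-bound factor, \eqref{eq:piu} combined with \eqref{eq:sizeball} and the choice $6\gamma\log\rho<1$ gives $\pi(B(x,r)) \leq c\log n\cdot n^{3\gamma\log\rho - 1} = o(1)$, so $1/\pi(B(x,r)^c)^2 = 1+o(1)$. Hence $1/E[H_x] = \cD(g^{\star},g^{\star})(1+o(1))$. Using \eqref{eq:Dgstar} and \lemref{lem:sumdeg}, which yields $\pi(x) = \degr(x)/(\rho(2-\xi)\xi n)\cdot(1+o(1))$, I obtain
$$\frac{u\rho(2-\xi)\xi n}{E[H_x]} = u\, P_x[\tilde H_x > H_{B(x,r)^c}]\degr(x)\cdot(1+o(1)).$$

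Second, on $\{x\text{ good}\}$ the event $\cI_x(G,\cT)$ identifies $B(x,r+1)\subset \cC_1$ with $B(\varnothing,r+1)\subset\cT$. Since the distribution of the walk killed at $\tilde H_x \wedge H_{B(x,r)^c}$ depends only on the subgraph induced on $B(x,r+1)$ (the radius $r+1$ is needed to see the transitions out of $\partial B(x,r)$), this transfers $P_x[\tilde H_x > H_{B(x,r)^c}]$ to $P^{\cT}_{\varnothing}[\tilde H_{\varnothing} > H_{B(\varnothing,r)^c}]$ and $\degr(x)$ to $\degr(\varnothing)$, yielding the required form of the exponent up to a multiplicative $(1+o(1))$.

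The main, if mild, obstacle is converting this multiplicative error into an $o(1)$ additive error in the exponent, since $\degr(\varnothing)$ is not uniformly bounded. For this, given $\epsilon>0$, I choose $K$ large enough so that $Q_x[\degr(\varnothing)>K\mid x\text{ good}]<\epsilon/2$; such a $K$ exists since $\degr(\varnothing)$ is a.s.\ finite under $\PP_{\cT}$. On $\{\degr(\varnothing)\leq K\}$ the exponent is bounded by $uK$, so the factor $(1+o(1))$ yields an additive error at most $uK\cdot o(1)=o(1)$ in the exponent, and thus by continuity of $t\mapsto e^{-t}$ on $[0,uK]$ an additive error of $o(1)$ in the exponential. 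Combining with the $o(1)$ error from \eqref{eq:expapprox1} gives $|P[H_x > u\rho(2-\xi)\xi n] - \exp(-u P^{\cT}_{\varnothing}[\tilde H_{\varnothing} > H_{B(\varnothing,r)^c}]\degr(\varnothing))|\leq \epsilon$ with $Q_x[\,\cdot\mid x\text{ good}]$-probability at least $1-\epsilon/2-o(1)$, which, since $\epsilon>0$ was arbitrary, proves the lemma.
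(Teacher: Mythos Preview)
Your proof is correct and follows essentially the same route as the paper: the exponential approximation \eqref{eq:expapprox1}, the sandwich of Proposition~\ref{prop:ctw} controlled via \lemref{lem:bfstar} and \eqref{eq:sizeball}/\eqref{eq:piu}, the identification $\cD(g^{\star},g^{\star})=P_x[\tilde H_x>H_{B(x,r)^c}]\pi(x)$ together with \lemref{lem:sumdeg}, and finally the transfer to $\cT$ via the ball isomorphism on $\{x\text{ good}\}$. The only minor difference is bookkeeping: the paper first extracts an \emph{additive} error $1/E[H_x]=\cD(g^{\star},g^{\star})+O(n^{-1-c'})$ (so multiplying by $n$ gives $o(1)$ in the exponent directly), whereas you carry a multiplicative $(1+o(1))$ throughout and then handle the potentially unbounded $\degr(\varnothing)$ by an explicit truncation---a point the paper passes over when it writes $u\rho(2-\xi)\xi n\,\cD(g^{\star},g^{\star})=uP_x[\tilde H_x>H_{B(x,r)^c}]\degr(x)+o(1)$, so your treatment is in fact slightly more careful there.
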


\begin{proof}
First recall \eqref{eq:Dgstar} and use \eqref{eq:piu} to get $\PP_{n,p}$-a.a.s.
\begin{equation} \cD(g^{\star},g^{\star}) = P_x\left[\tilde{H}_x > H_{B(x,r)^c}\right]\pi(x) \leq \frac{c \log n}{n}. \label{eq:bDgstar} \end{equation}

For the left hand approximation in \eqref{eq:ctw}, \lemref{lem:bfstar} and \eqref{eq:bDgstar} imply that $Q_x[~\cdot~|~x$~good$]$-a.a.s.
\begin{equation} \frac{1}{E[H_x]} \geq \cD(g^{\star},g^{\star}) - cn^{-1-c'}. \label{eq:fracepih1} \end{equation}

For the right hand approximation in \eqref{eq:ctw}, first recall that by \eqref{eq:sizeball} $\PP_{n,p}$-a.a.s., $|B(x,r)|\leq n^{1-\epsilon}$ for some $\epsilon>0$. Together with \eqref{eq:piu} we get $\PP_{n,p}$-a.a.s.
\begin{equation*} \pi(B(x,r)^c) \geq 1- |B(x,r)| \max_{v\in \cC_1}\pi(v) \geq1-c n^{-\epsilon}\log n. \end{equation*}
Using this and \eqref{eq:bDgstar} in \eqref{eq:ctw} yields $\PP_{n,p}$-a.a.s.
\begin{equation} \begin{split} \frac{1}{E[H_x]} &\leq  \cD(g^{\star},g^{\star}) \frac{1}{(1-c n^{-\epsilon}\log n)^2} \leq \cD(g^{\star},g^{\star}) \left(1+cn^{-\epsilon} \log n \right) \\ &\leq \cD(g^{\star},g^{\star}) + c n^{-1-\epsilon} \log^2 n \label{eq:fracepih2} \leq \cD(g^{\star},g^{\star}) + cn^{-1-c'}. \end{split} \end{equation}

Combining \eqref{eq:fracepih1} and \eqref{eq:fracepih2} we obtain that $Q_x[~\cdot~|~x$~good$]$-a.a.s.
\begin{equation*} e^{-\frac{u\rho(2-\xi)\xi n}{E[H_x]}} = e^{-u\rho(2-\xi)\xi n(\cD(g^{\star},g^{\star})+o(n^{-1}))} = e^{-u\rho(2-\xi)\xi n\cD(g^{\star},g^{\star})} + o(1). \end{equation*}
Together with \eqref{eq:expapprox1} it follows that
\begin{equation} Q_x\left[ \left| P\left[H_x>u\rho(2-\xi)\xi n\right] - e^{-u\rho(2-\xi)\xi n\cD(g^{\star},g^{\star})} \right| \leq \epsilon ~\big|~x\text{ good}\right] \to1 \text{ as } n\to\infty. \label{eq:exapprox2} \end{equation}

\lemref{lem:sumdeg} implies that $\PP_{n,p}$-a.a.s.~for $x\in \cC_1$, $\pi(x) = \frac{\degr(x)}{\rho(2-\xi)\xi n} (1+o(1))$. Recalling \eqref{eq:Dgstar}, this implies that $\PP_{n,p}$-a.a.s.
\begin{equation*} u\rho(2-\xi)\xi n\cD(g^{\star},g^{\star}) = u P_x\left[\tilde{H}_x > H_{B(x,r)^c}\right] \degr(x) + o(1). \end{equation*}
Using this in \eqref{eq:exapprox2}, and noting that if $x$ is good,
\begin{equation*}  e^{-u P_x\left[\tilde{H}_x > H_{B(x,r)^c}\right] \degr(x)} = e^{-u P^{\cT}_{\varnothing}\left[\tilde{H}_{\varnothing} > H_{B(\varnothing,r)^c}\right] \degr(\varnothing)}, \end{equation*}
finishes the proof of the lemma.
\end{proof}

\begin{proof}[Proof of part $(1)$ of Proposition~\ref{prop:sizeexpconc}]
We use the same technique as in the proof of \lemref{lem:sumdeg}: We compute the expectation of $E[|\cV^{u}|]$ under $\PP_{n,p}$ and then show that $E[|\cV^{u}|]$ is concentrated. Define the random variables
\begin{align*} W_x &=\Ind{x\in\cC_1}P[H_x > u\rho(2-\xi)\xi n], \\ \tilde{W}_x &=\Ind{|\cT|=\infty} e^{-u P^{\cT}_{\varnothing}\left[\tilde{H}_{\varnothing} > H_{B(\varnothing,r)^c}\right] \degr(\varnothing)}, \end{align*}
where the tree $\cT$ is defined by the coupling $Q_x$ from Proposition~\ref{prop:localtree}, and $\varnothing$ is the root of $\cT$.

Let us first compute the expectation of $\tilde{W}_x$ as $n\to\infty$. Since $r\to\infty$ as $n\to\infty$, and the tree $\cT$ has law $\PP_{\cT}^0$,
\begin{equation} \begin{split} \lim_{n\to\infty} \EE_{Q_x}\left[\tilde{W_x}\right] &= \lim_{n\to\infty} \EE_{\cT}^0\left[e^{-u P^{\cT}_{\varnothing}\left[\tilde{H}_{\varnothing} > H_{B(\varnothing,r)^c}\right] \degr(\varnothing)} ~\big|~ |\cT|=\infty \right] \PP_{\cT}^0\left[|\cT|=\infty \right] \\ &= \EE_{\cT}\left[e^{-u \capa_{\cT}(\varnothing)}\right]\xi. \label{eq:expwtilde} \end{split} \end{equation}

For $\epsilon>0$, define on the space of the coupling $Q_x$ the event
\begin{equation} \cA_{x,\epsilon} = \{|W_x-\tilde{W}_x|\leq \epsilon \}. \label{def:axepsilon} \end{equation}
By definitions \eqref{def:bx} and \eqref{def:xgood} of the events $\cB_x$ and $\{x$~good$\}$, on $\cB_x$ either $W_x=\tilde{W}_x=0$ or $x$ is good, i.e.~$\cA^c_{x,\epsilon}\cap\cB_x = \cA^c_{x,\epsilon}\cap\{x$~good$\}$. With \lemref{lem:exapprox2} and \eqref{eq:pbx} it follows that
\begin{equation} \begin{split} Q_x[\cA_{x,\epsilon}^c] &\leq Q_x\left[\cA_{x,\epsilon}^c,~\cB_x\right] + Q_x\left[\cB^c_x\right] \\ &\leq Q_x\left[\cA_{x,\epsilon}^c~\big|~x \text{ good}\right]Q_x\left[x \text{ good}\right] + Q_x\left[\cB^c_x\right] = o(1) \text{ as } n\to\infty. \label{eq:paxepsilon} \end{split} \end{equation}
Since $W_x$ and $\tilde{W}_x$ are bounded by $1$, this implies
\begin{equation*} \left| \EE_{Q_x}[W_x] - \EE_{Q_x}[\tilde{W}_x] \right | \leq \epsilon + Q_x[\cA_{x,\epsilon}^c] \text{ for any } \epsilon>0, \end{equation*}
and thus
\begin{equation} \EE_{Q_x}[W_x] = \EE_{Q_x}[\tilde{W}_x]+o(1) \text{ as }n\to\infty. \label{eq:wfirst} \end{equation}
With \eqref{eq:expwtilde} we conclude that
\begin{equation*} \EE_{n,p}\left[E[|\cV^{u}|]\right]= \EE_{n,p}\left[\sum_{x\in G} W_x \right] = n \EE_{Q_x}[W_x] = \xi n \EE_{\cT}\left[e^{-u \capa_{\cT}(\varnothing)}\right] + o(n) \text{ as } n\to\infty. \end{equation*}

For the concentration of $E[|\cV^{u}|]$ we use again the second moment method. Consider the coupling $Q_{x,y}$ from Proposition~\ref{prop:localtree} for two fixed vertices $x\neq y$. The random variable $\tilde{W}_z$ as well as the event $\cA_{z,\epsilon}$ for $z\in\{x,y\}$ are canonically also defined on the space of $Q_{x,y}$ when replacing $\cT$ by $\cT_z$ in the definition of $\tilde{W}_z$. Let $\cA_{x,y,\epsilon}=\cA_{x,\epsilon}\cap\cA_{y,\epsilon}$, and recall the definition \eqref{def:bxy} of the set $\cB_{x,y}$, on which either $W_z=\tilde{W}_z=0$ or $z$ is good, for both $z\in\{x,y\}$. Note that the statement of \lemref{lem:exapprox2} also holds on the space of $Q_{x,y}$ when replacing $\cT$ by $\cT_z$ for both $z\in\{x,y\}$ respectively. As in \eqref{eq:paxepsilon}, with \lemref{lem:exapprox2} and \eqref{eq:pbxy} we obtain
\begin{align*} Q_{x,y}[\cA_{x,y,\epsilon}^c] &\leq Q_{x,y}\left[\cA_{x,\epsilon}^c,~\cB_{x,y}\right] + Q_{x,y}\left[\cA_{y,\epsilon}^c,~\cB_{x,y}\right] + Q_{x,y}\left[\cB^c_{x,y}\right] \\ &\leq Q_{x,y}\left[\cA^c_{x,\epsilon}~\big|~x \text{ good}\right] + Q_{x,y}\left[\cA^c_{y,\epsilon}~\big|~y \text{ good}\right] + Q_{x,y}\left[\cB^c_{x,y}\right] = o(1) \text{ as } n\to\infty. \end{align*}
Since the $W_z$ and $\tilde{W}_z$ are bounded by $1$, it follows that
\begin{equation*} \left|\EE_{Q_{x,y}}[W_x W_y] - \EE_{Q_{x,y}}[\tilde{W}_x \tilde{W}_y] \right | \leq \epsilon + Q_{x,y}[\cA_{x,y,\epsilon}^c] \text{ for any } 1>\epsilon>0, \end{equation*}
and thus
\begin{equation} \EE_{Q_{x,y}}[W_x W_y] = \EE_{Q_{x,y}}[\tilde{W}_x \tilde{W}_y]+o(1) \text{ as }n\to\infty. \label{eq:wsecond} \end{equation}
The trees $\cT_x$ and $\cT_y$ are independent, so the random variables $\tilde{W}_x$ and $\tilde{W}_y$ are independent. Therefore, \eqref{eq:wfirst} and \eqref{eq:wsecond} imply that for arbitrary vertices $x\neq y$
\begin{equation*} \EE_{n,p}[W_x W_y] = \EE_{n,p}[W_x]\EE_{n,p}[W_y] + o(1) \text{ as } n\to\infty. \end{equation*}
Recall that $E[|\cV^{u}|]=\sum_{x\in G} W_x$. By the boundedness of the $W_x$, it follows directly from the above that 
\begin{align*} \EE_{n,p}\left[E[|\cV^{u}|]^2\right] = \EE_{n,p}\left[E[|\cV^{u}|]\right]^2 + o(n^2) \text{ as } n\to\infty. \end{align*}
Thus $\Var{E[|\cV^{u}|]}=o(n^2)$ and the Chebyshev inequality implies for any $\epsilon>0$
\begin{equation*} \PP_{n,p}\left[|E[|\cV^{u}|]-\EE_{n,p}\left[E[|\cV^{u}|]\right]|>\epsilon n \right] = o(1) \text{ as } n\to\infty. \end{equation*}
This finishes the proof of the first part of Proposition~\ref{prop:sizeexpconc}.
\end{proof}


\vspace{\baselineskip}
\subsection{Concentration of the size of the vacant set}\label{sec:concent}
To prove part (2) of Proposition~\ref{prop:sizeexpconc}, we use similar techniques as in \cite{CTW11} and \cite{CT11}. We define a sequence of i.i.d.~stationary started random walk trajectories of length $n^{\delta}$ and glue them together at the endpoints to obtain a trajectory which is, by the fast mixing of the random walk, in distribution close to the random walk on $\cC_1$ but has a different dependency structure, which allows to apply the following concentration result by \cite{McD98}.

\begin{theorem}{\cite[Theorem 3.7]{McD98}} \label{thm:conc}
Let $W=(W_1,...,W_M)$ be a familiy of random variables $W_k$ taking values in a set $\cA_k$, and let $f$ be a bounded real-valued function on $\prod \cA_k$. Let $\mu$ denote the mean of $f(W)$. Define
\begin{align*} r_k(&y_1,...,y_{k-1}) \\ &= \sup_{y,y'\in\cA_k} &\bigg| E\left[f(W)~\big|~W_k=y,~W_i=y_i ~\forall i<k\right] - E\left[f(W)~\big|~W_k=y',~W_i=y_i ~\forall i<k\right] \bigg|, \end{align*}
and let
\begin{equation*} R^2 = \sup_{y_1,...,y_{M-1}} \sum_{k=1}^M r_k^2 (y_1,...,y_{k-1}) . \end{equation*}
Then for any $t\geq0$,
\begin{equation*} P\left[|f(W)-\mu|\geq t\right] \leq 2 e^{-\frac{t^2}{R^2}}. \end{equation*}
\end{theorem}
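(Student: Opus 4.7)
The natural route is the classical Doob martingale plus Hoeffding approach. The plan is to reduce the general statement to a sum of bounded martingale differences, control each increment via Hoeffding's lemma, and then apply the exponential Markov inequality.

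First, introduce the filtration $\cF_k=\sigma(W_1,\dots,W_k)$ (with $\cF_0$ trivial) and the associated Doob martingale
\begin{equation*}
D_k=E\bigl[f(W)\,\big|\,\cF_k\bigr],\qquad D_0=\mu,\quad D_M=f(W),
\end{equation*}
so that $f(W)-\mu=\sum_{k=1}^M \Delta_k$ where $\Delta_k=D_k-D_{k-1}$ is a martingale difference sequence. The key structural observation is that, conditionally on $\{W_1=y_1,\dots,W_{k-1}=y_{k-1}\}$, the random variable $D_k$ is a function of $W_k$ alone, namely $y\mapsto E[f(W)\mid W_k=y,\,W_i=y_i\ \forall i<k]$, and by the very definition of $r_k(y_1,\dots,y_{k-1})$ this function takes values in an interval of length at most $r_k(y_1,\dots,y_{k-1})$. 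Since $D_{k-1}$ is the conditional mean of $D_k$ given $\cF_{k-1}$, the increment $\Delta_k$ has conditional mean zero and lives in an interval of length $r_k(W_1,\dots,W_{k-1})$.

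Second, apply Hoeffding's lemma in its sharp form: any mean-zero random variable $X$ supported in an interval of length $\ell$ satisfies $E[e^{\lambda X}]\leq e^{\lambda^2\ell^2/8}$. Conditioning on $\cF_{k-1}$ yields
\begin{equation*}
E\bigl[e^{\lambda \Delta_k}\,\big|\,\cF_{k-1}\bigr]\leq \exp\!\Bigl(\tfrac{\lambda^2}{8}\,r_k(W_1,\dots,W_{k-1})^2\Bigr).
\end{equation*}
Iterating the tower property from $k=M$ down to $k=1$ and using the pointwise bound $\sum_{k=1}^M r_k(W_1,\dots,W_{k-1})^2\leq R^2$ (which holds almost surely by the very definition of $R^2$ as a supremum), one obtains
\begin{equation*}
E\bigl[e^{\lambda(f(W)-\mu)}\bigr]\leq e^{\lambda^2 R^2/8}.
\end{equation*}

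Third, by the exponential Markov inequality, for any $\lambda>0$ and $t\geq 0$,
\begin{equation*}
P[f(W)-\mu\geq t]\leq e^{-\lambda t}\,E\bigl[e^{\lambda(f(W)-\mu)}\bigr]\leq \exp\!\Bigl(-\lambda t+\tfrac{\lambda^2 R^2}{8}\Bigr),
\end{equation*}
and optimizing over $\lambda$ (choosing $\lambda=4t/R^2$) gives the one-sided bound $e^{-2t^2/R^2}$. Applying the same argument to $-f$ yields the matching lower-tail bound, and summing the two gives the claimed two-sided inequality (in fact with the sharper exponent $2t^2/R^2$, of which the stated $t^2/R^2$ is a weakening).

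The only nontrivial step is the first one: verifying that the conditional range of the martingale increment $\Delta_k$ is precisely what the definition of $r_k$ controls. Once that is in place, the Hoeffding bound and the Chernoff optimization are entirely mechanical; the heart of the matter is the observation that the Doob martingale built from the filtration generated by the $W_i$ has its differences constrained exactly by the quantities $r_k$, regardless of any dependency structure among the $W_i$ themselves.
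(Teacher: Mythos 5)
The paper quotes this theorem directly from McDiarmid's survey \cite{McD98} without proof, so there is no paper argument to compare against. Your Doob-martingale-plus-conditional-Hoeffding route is the standard way to prove such inequalities and is essentially correct: in particular, the crucial structural observation that, conditionally on $W_1,\dots,W_{k-1}$, the increment $\Delta_k=D_k-D_{k-1}$ has mean zero and range at most $r_k(W_1,\dots,W_{k-1})$ is exactly right, and Hoeffding's lemma then controls the conditional moment generating function.

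One step deserves slightly more care than ``iterating the tower property''. Because $r_k$ is a function of the history $W_1,\dots,W_{k-1}$, the factor $e^{\lambda^2 r_k^2/8}$ produced at step $k$ is random and cannot simply be pulled out of the nested conditional expectations; a naive iteration only yields the weaker bound with $\sum_k \sup_{y_1,\dots,y_{k-1}} r_k^2$ in place of $R^2$, and these two quantities can differ. The clean repair is the exponential supermartingale $M_k=\exp\bigl(\lambda\sum_{j\leq k}\Delta_j-\tfrac{\lambda^2}{8}\sum_{j\leq k}r_j(W_1,\dots,W_{j-1})^2\bigr)$, which satisfies $E[M_k\mid\cF_{k-1}]\leq M_{k-1}$ and hence $E[M_M]\leq 1$; together with the pointwise bound $\sum_k r_k^2\leq R^2$ that you correctly singled out, this yields $E[e^{\lambda(f(W)-\mu)}]\leq e^{\lambda^2 R^2/8}$, after which your Chernoff optimization is routine. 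The resulting exponent $-2t^2/R^2$ is indeed sharper than the stated $-t^2/R^2$ and implies it, so nothing is lost; in short, the argument is sound and only the iteration step needs to be made explicit.
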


Let us define precisely the above mentioned approximation of the random walk. Denote by $P_x^L$ the restriction of $P_x$ to $\cC_1^{L+1}$, i.e.~the law of the trajectory $(X_0,...,X_L)$ and by $P_{x,z}^L$ the law of the random walk bridge, that is $P^L_x$ conditioned on $X_L=z$. Fix $\delta>0$ and let $L=n^{\delta}$. For a given typical random graph $G$ define on an auxiliary probability space $(\hat{\Omega}, \hat{\cA}, \hat{P})$  the i.i.d.~random variables $(Z^i)_{i\geq0}$ as vertices of $\cC_1$ chosen according to the stationary measure $\pi$. Given the collection $(Z^i)$, let $(Y^i)_{i\geq1}$ be conditionally independent elements of $\cC_1^{L+1}$ such that each $(Y^i_k)_{k=0,...,L}$ is distributed according to the random walk bridge $P^L_{Z^{i-1},Z^i}$. We define the concatenation of the $Y^i$ as
\begin{equation*} \cX_t = Y^i_{t-(i-1)L}, \qquad \text{when } (i-1)L\leq t< iL. \end{equation*}
Denote by $\cP^{u}$ the law of $\cX$ on $\cC_1^{u\rho(2-\xi)\xi n+1}$ and write $P^u$ for $P^{u\rho(2-\xi)\xi n}$, that is $P$ restricted to $\cC_1^{u\rho(2-\xi)\xi n+1}$. The next lemma shows that $\cP^{u}$ approximates $P^{u}$ well if $L$ is large enough.

\begin{lemma} \label{lem:concat}
$\PP_{n,p}$-a.a.s.~the measures $\cP^{u}$ and $P^{u}$ are equivalent, and for $n$ large enough and constants $c$, $c'$ depending on $\delta$,
\begin{equation*} \left| \frac{dP^{u}}{d\cP^{u}} -1 \right| \leq c e^{-c'n^{\frac{\delta}{2}}}. \end{equation*}
\end{lemma}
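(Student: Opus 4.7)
My plan is to compute the Radon-Nikodym derivative $dP^u/d\cP^u$ in closed form, observe that all per-step transition factors cancel between the two measures, and then reduce the problem to a single application of the mixing estimate \eqref{eq:mix} on each block of length $L = n^\delta$, followed by a product-of-small-perturbations estimate.

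Parametrising a trajectory $(x_0,\ldots,x_T)$ with $T=u\rho(2-\xi)\xi n$ by its anchor values $z^i:=x_{iL}$ at times $iL$ for $i=0,1,\ldots,M$ with $M=T/L$, the density under $P^u$ is $\pi(z^0)\prod_{k=0}^{T-1} p_{x_k x_{k+1}}$, whereas under $\cP^u$ the i.i.d.\ anchors and the bridge definition give $\prod_{i=0}^M \pi(z^i)\cdot\prod_{i=1}^M p_L(z^{i-1},z^i)^{-1}\prod_{k=0}^{L-1} p_{x_{(i-1)L+k} x_{(i-1)L+k+1}}$, with $p_L(z,z'):=P_z[X_L=z']$. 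Dividing and cancelling the common product of one-step transitions yields
\begin{equation*}
\frac{dP^u}{d\cP^u}(\cX) \;=\; \prod_{i=1}^M \frac{p_L(Z^{i-1},Z^i)}{\pi(Z^i)}.
\end{equation*}

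It remains to bound each factor. By \eqref{eq:mix} at time $L=n^\delta$, the spectral gap bound \eqref{prop:gap}, and the lower bound \eqref{eq:pil}, I obtain $\PP_{n,p}$-a.a.s.\ uniformly in $z,z'\in\cC_1$
\begin{equation*}
\left|\frac{p_L(z,z')}{\pi(z')}-1\right| \;\leq\; \frac{e^{-\lambda_{\cC_1} L}}{(\min_v\pi(v))^2} \;\leq\; c\, n^2 \log^2 n \cdot \exp\!\left(-c' n^\delta / \log^2 n\right) \;\leq\; \exp(-c'' n^{\delta/2})
\end{equation*}
for $n$ large enough. In particular every factor is positive, which establishes that $P^u$ and $\cP^u$ are equivalent.

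Finally, with $M\leq c n^{1-\delta}$ factors each of the form $1+\eta_i$ with $|\eta_i|\leq\exp(-c'' n^{\delta/2})$, the elementary inequality $|\prod_{i=1}^M(1+\eta_i)-1|\leq e^{M\max_i|\eta_i|}-1\leq 2M\max_i|\eta_i|$ gives
\begin{equation*}
\left|\frac{dP^u}{d\cP^u}-1\right| \;\leq\; c\, n^{1-\delta}\exp(-c'' n^{\delta/2}) \;\leq\; c \exp(-c' n^{\delta/2})
\end{equation*}
for $n$ sufficiently large, as required. The argument is essentially routine; the only bookkeeping worth noting is that the three a.a.s.\ ingredients -- the spectral gap \eqref{prop:gap}, the uniform lower bound on $\pi$ coming from \eqref{prop:maxdeg}, and the validity of the bridge decomposition -- are all guaranteed by the notion of \emph{typical graph} introduced in Section~\ref{sec:not}, so the bound indeed holds $\PP_{n,p}$-a.a.s.
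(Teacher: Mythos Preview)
Your proof is correct and follows essentially the same route as the paper's: both identify $dP^u/d\cP^u$ with the product $\prod_{i=1}^{M} p_L(Z^{i-1},Z^i)/\pi(Z^i)$ over blocks and then bound each factor via the mixing estimate \eqref{eq:mix} together with \eqref{prop:gap} and \eqref{eq:pil}. The only minor bookkeeping point you omit is the case where $T=u\rho(2-\xi)\xi n$ is not an exact multiple of $L$; the paper handles this by first passing to the smallest $u'\geq u$ with this property and noting that $\cP^{u}$, $P^{u}$ are restrictions of $\cP^{u'}$, $P^{u'}$.
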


\begin{proof}
Let $u'$ be the smallest number greater or equal to $u$ such that $u'\rho(2-\xi)\xi n$ is an integer multiple of $L$ and set $m=\frac{u'\rho(2-\xi)\xi n}{L}$. Since $\cP^{u}$ and $P^{u}$ are the restrictions of $\cP^{u'}$ and $P^{u'}$ to $\cC_1^{u\rho(2-\xi)\xi n+1}$, it is sufficient to prove the lemma for $\cP^{u'}$ and $P^{u'}$. Let $A$ be any measurable subset of $\cC_1^{u\rho(2-\xi)\xi n+1}$. Then by the Markov property
\begin{align} P^{u'}[A] &= \sum_{x_0,...,x_m \in \cC_1} P^{u'}\left[A~\big|~X_{iL}=x_{i},~0\leq i\leq m\right] P^{u'}\left[X_{iL}=x_{i},~0\leq i\leq m\right] \notag \\
													&= \sum_{x_0,...,x_m \in \cC_1} P^{u'}\left[A~\big|~X_{iL}=x_{i},~0\leq i\leq m\right] \pi(x_0) \prod_{k=0}^m P^L_{x_k} [X_L=x_{k+1}]. \label{eq:concat1} \end{align}
													
Next, note that $\cP^{u'}\left[X_{iL} = x_{i},~0\leq i\leq m\right]=0$ if and only if $P^{u'}\left[X_{iL}=x_{i},~0\leq i\leq m\right]=0$: One can always choose the $m$ $x_i$'s, but there might not be any way to connect them by random walk bridges, whence the probability is zero. In this case, there is also no random walk trajectory going through this points. On the other hand, when there is no such trajectory, there are also no bridges.
												
From this and the construction of the measure $\cP$ it follows that, whenever this is well-defined,
\begin{equation} \begin{split} \cP^{u'}\left[A~\big|~X_{iL} = x_{i},~0\leq i\leq m\right] &= P^{u'}\left[A~\big|~X_{iL}=x_{i},~0\leq i\leq m\right], \label{eq:concat2} \\
								\cP^{u'}\left[X_{iL} = x_{i},~0\leq i\leq m\right] &= \prod_{k=0}^m \pi(x_k).  \end{split} \end{equation}

Comparing \eqref{eq:concat1} and \eqref{eq:concat2}, it remains to control the ratio $\frac{P^L_{x} [X_L=y]}{\pi(y)}$. We use \eqref{eq:mix}, \eqref{eq:pil} and \eqref{prop:gap} to get $\PP_{n,p}$-a.a.s.
\begin{equation*} \left| \frac{P^L_{x} [X_L=y]}{\pi(y)} -1 \right| \leq \frac{1}{(\min_{z\in \cC_1} \pi(z))^2} e^{-\lambda_{\cC_1}L}\leq cn^2\log^2n e^{-\frac{c' }{\log^2n} L}. \end{equation*}
With $\frac{n^{\delta}}{\log^2n}\geq cn^{\frac{\delta}{2}}$ for $n$ large enough it follows that $\PP_{n,p}$-a.a.s.
\begin{equation*} \left( 1- cn^2\log^2n e^{-c'n^{\frac{\delta}{2}}} \right)^m \leq \frac{P^{u'}[A]}{\cP^{u'}[A]} \leq \left( 1 + cn^2\log^2n e^{-c'n^{\frac{\delta}{2}}}  \right)^m, \end{equation*}
and hence $\PP_{n,p}$-a.a.s.~$\cP^{u'}$ and $P^{u'}$ are equivalent, and the lemma follows by changing constants to accomodate the terms polynomial in $n$ and $\log n$.
\end{proof}

\begin{proof}[Proof of part $(2)$ of Proposition~\ref{prop:sizeexpconc}]
We show that for any $\delta>0$,
\begin{equation} P\left[\big||\cV^{u}|-E[|\cV^{u}|]\big| \geq n^{\frac{1}{2}+\delta}\right] \leq ce^{-c'n^{\frac{\delta}{2}}} \qquad \PP_{n,p} \text{-a.a.s.}, \label{eq:concentration} \end{equation}
which implies the statement of the proposition.

Set $m=\floor{\frac{u\rho(2-\xi)\xi n}{L}}$ and $u'=\frac{mL}{\rho(2-\xi)\xi n}$. Then $u\rho(2-\xi)\xi n-u'\rho(2-\xi)\xi n\leq L$, and so $\left||\cV^{u}|-|\cV^{u'}|\right|\leq L$. It follows that for $n$ large enough
\begin{equation} \begin{split} P\left[\big||\cV^{u}|-E[|\cV^{u}|]\big| \geq n^{\frac{1}{2}+\delta}\right] &\leq P\left[\left||\cV^{u'}|-E[|\cV^{u'}|]\right| \geq n^{\frac{1}{2}+\delta}-2L\right] \\ &\leq P\left[\left||\cV^{u'}|-E[|\cV^{u'}|]\right| \geq \frac{1}{2} n^{\frac{1}{2}+\delta}\right]. \label{eq:conc1} \end{split} \end{equation}
Let $\cU^{u'}=\cC_1\setminus\cX_{[0,mL]}$ be the vacant set left by the concatenation $\cX$, and denote by $\cE$ the expectation corresponding to $\cP$. \lemref{lem:concat} implies that $\PP_{n,p}$-a.a.s.
\begin{align*} \left|P\left[\cV^{u'} \in ~\cdot~ \right] - \cP\left[\cU^{u'} \in ~\cdot~ \right] \right| &\leq ce^{-c'n^{\frac{\delta}{2}}}, \\
								\left| E[|\cV^{u'}|] - \cE[|\cU^{u'}|] \right| \leq cn e^{-c'n^{\frac{\delta}{2}}} &\leq \frac{1}{4} n^{\frac{1}{2}+\delta}. \end{align*}
From this we obtain that $\PP_{n,p}$-a.a.s.
\begin{equation} P\left[ \left||\cV^{u'}|-E[|\cV^{u'}|]\right| \geq \frac{1}{2} n^{\frac{1}{2}+\delta}\right] \leq \cP\left[\left||\cU^{u'}|-\cE[|\cU^{u'}|]\right| \geq \frac{1}{4} n^{\frac{1}{2}+\delta}\right] + c e^{-c'n^{\frac{\delta}{2}}}. \label{eq:conc2} \end{equation}
We now apply \thmref{thm:conc} with $M=m$, $\cA_k=\cC_1^{L+1}$, $W_k=Y^k$ and $f(W)=|\cU^{u'}|$. We claim that
\begin{align*} &r_k(y_1,...,y_{k-1}) \\ &= \sup_{y,y'\in\cA_k} \left| E\left[|\cU^{u'}|~\big|~Y^k=y,~Y^i=y_i ~\forall i<k\right] - E\left[|\cU^{u'}|~\big|~Y^k=y',~Y^i=y_i ~\forall i<k\right] \right| \\ &\leq 2L. \end{align*}
Indeed, when conditioning additionally on $Y^{k+2},...,Y^m$, the only two different segments $Y^k$ and $Y^{k+1}$ can change the size of the vacant set by at most the length of two segments, and the claim follows by integrating over all possible $Y^{k+2},...,Y^m$.

Then $R^2 \leq m(2L)^2 \leq \frac{u\rho(2-\xi)\xi n}{L} 4L^2 = c n^{1+\delta}$, and \thmref{thm:conc} implies
\begin{equation*} \cP\left[\left||\cU^{u'}|-\cE[|\cU^{u'}|]\right| \geq \frac{1}{4} n^{\frac{1}{2}+\delta}\right] \leq 2 e^{-2\frac{\frac{1}{16}n^{1+2\delta}}{c n^{1+\delta}}} = ce^{-c'n^{\delta}}. \end{equation*}
This together with \eqref{eq:conc1} and \eqref{eq:conc2} proves \eqref{eq:concentration} and hence part $(2)$ of Proposition~\ref{prop:sizeexpconc}.
\end{proof}


\vspace{\baselineskip}
\section{Coupling of processes} \label{sec:coupling}

In this section we introduce a process $\bar{X}$ which satisfies the spatial Markov property described in the introduction. We derive a phase transition in the vacant set of this process, and we compare it with the simple random walk $X$ on the giant component.

Consider the following algorithm defined on an auxiliary probability space $(\tilde{\Omega}, \tilde{\cA}, \tilde{P})$ which builds an element of $\Omega_n=\cG(n)\times\{1,2,...,n\}^{\NN_0}$, that is a graph on $n$ vertices and a random walk-like process on this graph. All the random choices made in the algorithm are independent variables defined on $\tilde{\Omega}$.

\begin{algorithm} \label{algo}
At the beginning all $n$ vertices are unvisited, and all $\binom{n}{2}$ possible edges are unexplored. When the algorithm (or the so defined process) passes an unvisited vertex, this vertex is marked visited. Edges adjacent to the vertex will be explored and become either open or closed. After being explored, the state of an egde does not change.
\begin{enumerate}[(1)]
\item Start at time 0 with a uniformly chosen vertex $v_0$ among all $n$ vertices, mark it visited.
\item Being at time $k\geq0$ with current vertex $v_k$, check first if there are any unvisited vertices left:
	\begin{itemize}
	\item If there are, let any unexplored egde adjacent to $v_k$ be explored and marked open with probability $p=\frac{\rho}{n}$ and closed otherwise. All vertices $w$ such that the egde $\{v_k,w\}$ is open are called neighbours of $v_k$.
	\item If there are no unvisited vertices left, let $\{v_l\}_{l>k}$ be uniformly at random chosen vertices and terminate the algorithm (this choice of continuation of the process $v_k$ is totally arbitrary and does not influence the reasoning below).
	\end{itemize}
\item If $v_k$ has at least one neighbour, and if there are any unvisited vertices adjacent to explored edges, choose vertex $v_{k+1}$ uniformly among all neighbours of $v_k$ and mark $v_{k+1}$ visited, go to step (2) and proceed with current vertex $v_{k+1}$.
\item If $v_k$ has no neighbours or if there are no unvisited vertices adjacent to explored edges, the current component is entirely covered. Then choose vertex $v_{k+1}$ uniformly among all $n$ vertices, mark it visited, go to step (2) and proceed with current vertex $v_{k+1}$.
\end{enumerate}
\end{algorithm}

By construction, the law of the graph explored by this algorithm (edges present if they are marked open) is $\PP_{n,p}$. Let $\bar{X}$ be the process defined by $\bar{X}_k=v_k$.

It will be helpful to have two different points of view on Algorithm~\ref{algo}. The first is to look at the picture at the end of the algorithm: There is a graph $G$ and a trajectory of $\bar{X}$ covering all the vertices of the graph. Using this point of view, denote by $\bar{P}^G$ the law on $(\{1,2,...,n\}^{\NN_0},\FF_n)$ of the process $\bar{X}$ under $\tilde{P}$ conditioned on the event that the graph explored by the algorithm is $G\in\cG(n)$ (i.e.~conditioned on the random choices in Algorithm~\ref{algo} that determine the states of edges, but not on the random choices that determine the trajectory of $\bar{X}$). Under $\bar{P}^G$, the process $\bar{X}$ is, between two occurences of step (4) of the algorithm, a simple random walk on the currently explored component, started with uniform distribution on this component. Define on $\Omega_n=\cG(n)\times\{1,2,...,n\}^{\NN_0}$ the annealed measure (cf.~\eqref{def:annealed}) by
\begin{equation*} \bar{\bP}_n(A\times B)=\sum_{G\in A} \PP_{n,p}(G) \bar{P}^G(B)  \qquad \text{for } A\in\GG_n,~B\in\FF_n. \end{equation*}

The second point of view is to look at Algorithm~\ref{algo} as building the graph $G$ on-the-go. Having this in mind, the next lemma, which is crucial for the proof of \thmref{thm1}, is straightforward (cf.~\cite[Lemma 6]{CF11} for a similar statement). Let $\bar{\cV}(t) = G \setminus \bar{X}_{[0,t]}$ be the vacant set left by the process $\bar{X}$ at time $t$, defined on $(\Omega_n, \bar{\bP}_n)$. Once again we use the same notation $\bar{\cV}(t)$ for the set of vertices as well as the induced subgraph of $G$.

\begin{lemma} \label{lem:defdec}
Under $\bar{\bP}_n$ conditioned on $|\bar{\cV}(t)|=N$ the graph $\bar{\cV}(t)$ has marginal law $\PP_{N,p}$.
\end{lemma}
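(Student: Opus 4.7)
The plan is to exploit the on-the-go graph construction of Algorithm~\ref{algo}. The key observation is that at each step $k$, only possible edges incident to the current vertex $v_k$ are explored; all potential edges between pairs of currently unvisited vertices retain their unrevealed, independent Bernoulli($p$) status.

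First, I would formalize the exploration by induction on $k$. Let $E_k$ denote the set of edges that have been explored after executing step (2) at time $k$, and let $V_k = \{v_0,\ldots,v_k\}$ be the visited set. Then $E_k$ consists precisely of all potential edges with at least one endpoint in $V_k$: this is true at $k=0$ (vacuously) and step (2) simply adds to $E_k$ all unexplored edges adjacent to the new current vertex. Consequently, at time $t$, every potential edge inside the unvisited set $U_t = G\setminus\bar{X}_{[0,t]} = \bar{\cV}(t)$ is still unexplored.

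Second, I would appeal to the independence structure of the random choices in Algorithm~\ref{algo}. The Bernoulli($p$) variables that will eventually determine the states of the currently unexplored edges are, by construction on $(\tilde{\Omega},\tilde{\cA},\tilde{P})$, mutually independent and independent of all the random choices driving the trajectory $(\bar{X}_0,\ldots,\bar{X}_t)$ together with the Bernoulli outcomes that produced the explored-edge states in $E_t$. In particular, they are independent of the $\sigma$-algebra generated by $(\bar{X}_0,\ldots,\bar{X}_t)$, which contains the event $\{U_t = W\}$ for every fixed $W\subset\{1,\ldots,n\}$.

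Third, I would conclude as follows. Fix any $W\subset\{1,\ldots,n\}$ with $|W|=N$. Conditionally on $\{U_t=W\}$, the induced subgraph $\bar{\cV}(t)$ on vertex set $W$ is determined exactly by the $\binom{N}{2}$ independent Bernoulli($p$) coin flips attached to the pairs in $W$ — none of which have been touched during the first $t$ steps — so it has law $\PP_{N,p}$. Summing over $W$ with $|W|=N$ yields the claimed marginal. The main (modest) obstacle is making this conditioning argument rigorous: one must keep careful track of the two disjoint families of Bernoulli variables — those sampled during steps $0,\ldots,t$ and those still pending inside $U_t$ — and verify that the history filtration up to time $t$ depends only on the former. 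Once this separation of $\sigma$-algebras is in hand, the spatial Markov property, and hence the lemma, follows immediately.
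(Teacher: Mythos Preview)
Your proposal is correct and follows essentially the same approach as the paper's proof: both argue that every potential edge inside the unvisited set $U_t$ is still unexplored at time $t$, hence its state is an independent Bernoulli($p$) variable, independent of the history, and conditioning on $|U_t|=N$ yields $\PP_{N,p}$. Your write-up is simply a more formalized version, with the inductive bookkeeping on $E_k$ and the explicit $\sigma$-algebra separation spelled out; the only (harmless) slip is calling the base case $k=0$ ``vacuous'' --- it is not, since $V_0=\{v_0\}$ and $E_0$ already contains all edges incident to $v_0$.
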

\begin{proof}
By construction of Algorithm~\ref{algo}, the vacant graph $\bar{\cV}(t)$ consists of the $|\bar{\cV}(t)|$ unvisited vertices at time $t$. Edges possibly connecting $\bar{\cV}(t)$ and the already visited vertices as well as all edges possibly connecting two already visited vertices are explored. So the edges eligible to be edges of $\bar{\cV}(t)$ are exactly all unexplored edges at time $t$. Because their state has not yet been decided by the algorithm, all these egdes are open with probability $\frac{\rho}{n}$, independently of what happened before, independently of each other. Therefore, the vacant graph $\bar{\cV}(t)$ is a standard Erd\H{o}s-R\'enyi random graph on $N=|\bar{\cV}(t)|$ vertices, every edge present with probability $p=\frac{\rho}{n}$, and hence it has law $\PP_{N,p}$.
\end{proof}

From \lemref{lem:defdec} and the classical results on random graphs it follows directly that the component structure of the vacant graph $\bar{\cV}(t)$ exhibits a phase transition at the time $t$ for which $|\bar{\cV}(t)|\frac{\rho}{n}=1$. To translate this phase transition to the simple random walk $X$ on the giant component $\cC_1(G)$, we need to couple $X$ to the process $\bar{X}$. We do this by first giving a coupling of $X$ and $\bar{X}$ under $P^{\cC_1}$ and $\bar{P}^G$ respectively on a fixed typical graph $G$. In Section~\ref{sec:proof} we will extend this coupling to an ``annealed'' coupling of $X$ and $\bar{X}$ under $\bP_n$ and $\bar{\bP}_n$ respectively.

\begin{proposition} \label{prop:coupling}
For $n$ large enough, for every fixed typical graph $G\in\cG(n)$ there exists a coupling $Q^G$ of $\bar{X}$ under $\bar{P}^G$ and $X$ under $P^{\cC_1(G)}$ such that
\begin{equation*}  Q^G \left[ \{X_k=\bar{X}_{k+2\log^5 n} \text{ for all }k=0,1,...,u\rho(2-\xi)\xi n \}^c\right] \leq \frac{c}{n^{c'}}. \end{equation*}
\end{proposition}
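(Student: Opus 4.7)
The plan is to let $\bar X$ evolve under $\bar P^G$ for a warm-up period of $2\log^5 n$ steps, during which it should (a) enter $\cC_1(G)$ and (b) mix close to the stationary measure $\pi$, and then couple $\bar X_{2\log^5 n}$ with $X_0\sim\pi$ and copy the trajectories step by step. I would split the warm-up into two phases of length $\log^5 n$ each.

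\textbf{Phase 1: entry into $\cC_1(G)$.} By \eqref{prop:giant}, every non-giant component of $G$ is simple with at most $c\log n$ vertices, so its cover time is at most $O(\log^2 n)$ via the standard $2m\cdot\mathrm{diam}$ bound. Each call to step (4) of Algorithm~\ref{algo} picks a uniform vertex on $\{1,\dots,n\}$, which lies in $\cC_1(G)$ with probability at least $\xi/2$. Hence the number of calls to step (4) before $\bar X$ first lands in $\cC_1(G)$ is stochastically dominated by a geometric random variable with success probability $\xi/2$, and exceeds $\log n$ with probability at most $(1-\xi/2)^{\log n}\leq n^{-c}$. On the complementary event the time elapsed is at most $\log n\cdot O(\log^2 n)\leq\tfrac12\log^5 n$, so $\bar X$ is in $\cC_1(G)$ by time $\log^5 n$.

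\textbf{Phase 2: mixing and initial coupling.} Once in $\cC_1(G)$, under $\bar P^G$ the process $\bar X$ performs simple random walk on $\cC_1(G)$ until it covers it; since the cover time of $\cC_1(G)$ is $\Omega(n)$ this does not happen during the warm-up. Conditioning on the entry point $y\in\cC_1(G)$ and applying \eqref{eq:mix} with \eqref{prop:gap} and \eqref{eq:pil} at $t=\log^5 n$ yields
\begin{equation*} \max_{z\in\cC_1}\bigl|P_y[X_t=z]-\pi(z)\bigr|\leq cn\log n\cdot e^{-c'\log^3 n}\leq n^{-c''}, \end{equation*}
which after summation over $z\in\cC_1$ bounds the TV distance between the law of $\bar X_{2\log^5 n}$ and $\pi$ by $n^{-c}$. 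The standard TV coupling then produces a joint law of $\bar X_{2\log^5 n}$ and $X_0\sim\pi$ under which the two agree with probability $\geq 1-n^{-c}$. After they meet I would couple the processes to take the same uniform-neighbour step at every time, so that $\bar X_{2\log^5 n+k}=X_k$ as long as $\bar X$ does not execute step (4) again, which can happen only after $\bar X$ has covered $\cC_1(G)$.

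\textbf{Tail estimate and main obstacle.} Under the coupling, the set of $\cC_1(G)$-vertices unvisited by $\bar X$ on $[0,2\log^5 n+u\rho(2-\xi)\xi n]$ equals $\cV^u\setminus \bar X_{[0,2\log^5 n]}$, of cardinality at least $|\cV^u|-2\log^5 n-1$. By the concentration estimate \eqref{eq:concentration} together with Proposition~\ref{prop:sizeexpconc}(1) (which for typical $G$ gives $E[|\cV^u|]\geq cn$), the bound $P[|\cV^u|\leq 2\log^5 n+1]\leq ce^{-c'n^{\delta/2}}\leq n^{-c''}$ ensures that $\bar X$ does not cover $\cC_1(G)$ during the window, and summing the three failure probabilities gives the claimed bound. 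The delicate part of the argument is Phase 1: step (4) of Algorithm~\ref{algo} may repeatedly land $\bar X$ on an already-visited vertex of a previously covered small component, producing wasted time steps before another real jump to an unvisited vertex. Since the total number of vertices ever visited during the warm-up is $O(\log^5 n)$, each uniform choice in step (4) has probability $O(\log^5 n/n)$ of being wasted, and a union bound over the polylogarithmic number of jumps absorbs these events into the failure budget. Phase 2, the TV coupling, and the step-by-step identification are all standard once Phase 1 and the distributional control at time $2\log^5 n$ are established.
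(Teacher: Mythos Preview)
Your overall architecture matches the paper's proof: a warm-up of $2\log^5 n$ steps split into an entry phase and a mixing phase, a TV coupling of $\bar X_{2\log^5 n}$ with $X_0\sim\pi$, step-by-step identification thereafter, and the concentration estimate \eqref{eq:concentration} to rule out covering $\cC_1$ during the window. Phase~2 and the tail estimate are essentially the paper's arguments.

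There is, however, a genuine gap in Phase~1. You write that ``its cover time is at most $O(\log^2 n)$ via the standard $2m\cdot\mathrm{diam}$ bound'' and then conclude that on the event $\{N_s\le\log n\}$ the elapsed time is at most $\log n\cdot O(\log^2 n)$. The $2m(k-1)$ bound controls the \emph{expected} cover time, not the cover time itself; the random variables $C_s^{(i)}$ can be large, and your argument gives no control on $\bar P^G\bigl[\sum_{i\le\log n}C_s^{(i)}>\tfrac12\log^5 n\bigr]$. A naive Markov bound here yields only $O(\log^{-2} n)$, which is far from the polynomial decay you claim. Your sharp geometric-tail bound $(1-\xi/2)^{\log n}\le n^{-c}$ on the number of step-(4) calls is therefore wasted: the bottleneck is the cover-time sum, not the number of components.

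The paper handles this differently. It first proves only the weak estimate $\bar P^G[\bar X_{\log^4 n}\notin\cC_1]\le c/\log n$, using Markov's inequality both for $N_s$ and for $\sum_{i\le\log n}C_s^{(i)}$ over a window of length $\log^4 n$. It then observes that, conditionally on $\{\bar X_{\log^4 n}\notin\cC_1\}$, the same reasoning applies to the next block of $\log^4 n$ steps, and iterates $\log n$ times to obtain $\bar P^G[\bar X_{\log^5 n}\notin\cC_1]\le (c/\log n)^{\log n}\le n^{-c'}$. If you wish to keep your more direct route, you would need an exponential tail bound for the cover time of a simple component of size $\le C\log n$ (e.g.\ $P[C_s^{(i)}>\log^3 n]\le n^{-c}$ via the submultiplicativity $P[C_s>2kE C_s]\le 2^{-k}$), after which a union bound over $i\le\log n$ closes the gap. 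As written, Phase~1 does not deliver the polynomial bound.
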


\begin{proof}
We first show that $\bar{X}$ typically is on the largest component $\cC_1$ at time $\log^{5}n$, that it mixes quickly, and then stays on $\cC_1$ until time $u\rho(2-\xi)\xi n + 2\log^{5}n$. This will allow us to identify $X$ with $\bar{X}$ in this time interval on an event of high probability.

Let $G$ be the typical graph (i.e.~a graph satisfying \eqref{prop:giant}, \eqref{prop:gap} and \eqref{prop:maxdeg}) explored by Algorithm~\ref{algo} and $\cC_1$ its giant component, i.e.~we look at the picture after completion of the algorithm. Define the probability distribution $\bar{\pi}$ on $G$ as the distribution of $\bar{X}_{2\log^{5}n}$, and view the stationary distribution $\pi$ of the random walk on $\cC_1$ as a distribution on the whole graph $G$ by setting $\pi\equiv0$ on $G \setminus \cC_1$. Denote by $||\cdot||_{\text{TV}}$ the total variation norm. Define $\tau=\min\{t\geq\log^{5}n:$ step $(4)$ of Algorithm~\ref{algo} is performed$\}$. $\tau$ is the first time after $\log^{5}n$ where $\bar{X}$ does not behave like a random walk. We show that for $n$ large enough the following properties hold for a typical graph $G$:
\begin{align} &\bar{P}^G[\bar{X}_{\log^{5}n} \notin \cC_1] \leq \frac{c}{n^{1+c'}}, \label{eq:xbarinc1} \\ 
							&\bar{P}^G[\tau \leq u\rho(2-\xi)\xi n + 2\log^{5}n] \leq \frac{c}{n^{1+c'}}, \label{eq:taujump} \\
							&\left\|\bar{\pi}-\pi\right\|_{\text{TV}} \leq \frac{c}{n^{c'}}. \label{eq:tvto0} \end{align} 

Since $G$ is typical, there is a giant component of size $\left||\cC_1| - \xi n\right| \leq n^{3/4}$, and all other components are simple (i.e.~they have at most as many edges as vertices) and of size smaller than $C \log n$. For \eqref{eq:xbarinc1}, since for $n$ large enough the random walk cannot cover $\cC_1$ in $\log^{4}n$ steps,
\begin{align} \bar{P}^G[\bar{X}_{\log^{4}n} \notin \cC_1] \leq \bar{P}^G[&\bar{X} \text{ starts on a small component and stays on small} \label{eq:psmallc}\\ &\text{components for time longer than } \log^{4} n]. \notag \end{align}
Let $N_s$ be the number of small components that $\bar{X}$ visits before reaching the giant component. By construction and since by \eqref{prop:giant} $|\cC_1|\geq (\xi -\epsilon)n$ for some $\epsilon>0$, $N_s$ is stochastically dominated by a Geometric($\xi-\epsilon$) random variable, in particular it has a finite mean. Then by the Markov inequality
\begin{equation} \bar{P}^G[N_s\geq \log n] \leq \frac{c}{\log n}. \label{eq:psmallc1} \end{equation}
Let $C_s^{(i)}$ be the cover time of the $i$-th small component covered by $\bar{X}$. The expected cover time of a graph on $k$ vertices and $m$ edges is bounded by $2m(k-1)$ (see e.g.~\cite{Al79}), so the expected cover time $\bar{E}^G[C_s^{(i)}]$ of a simple component of size smaller than $C\log n$ is bounded by $C'\log^2n$. The Markov inequality implies 
\begin{equation} \bar{P}^G\left[\sum_{i=1}^{\log n} C_s^{(i)} \geq \log^{4} n\right] \leq \frac{\log n \bar{E}^G[C_s^{(i)}]}{\log^{4} n} \leq \frac{c}{\log n}. \label{eq:psmallc2} \end{equation}
From \eqref{eq:psmallc1} and \eqref{eq:psmallc2} it follows that the probability on the right hand side of \eqref{eq:psmallc} is  smaller than $\frac{c}{\log n}$. Given $\bar{X}$ has not found $\cC_1$ after $\log^4 n$ steps, some small components are partly or entirely covered, but one can use the same line of arguments as above for the next $\log^4n$ steps to get
\begin{equation*} \bar{P}^G \left[\bar{X}_{2\log^4 n} \notin \cC_1 \mid \bar{X}_{\log^4 n} \notin \cC_1 \right] \leq \bar{P}^G \left[ \bar{X}_{\log^4 n} \notin \cC_1 \right]. \end{equation*}
Using this, we have
\begin{equation*} \bar{P}^G \left[\bar{X}_{2\log^4 n} \notin \cC_1 \right] = \bar{P}^G \left[\bar{X}_{2\log^4 n} \notin \cC_1 \mid \bar{X}_{\log^4 n} \notin \cC_1 \right] \bar{P}^G \left[ \bar{X}_{\log^4 n} \notin \cC_1 \right] \leq \bar{P}^G \left[ \bar{X}_{\log^4 n} \notin \cC_1 \right]^2. \end{equation*}
Since $\bar{X}$ cannot cover $\cC_1$ in $\log^5n$ steps we can iterate the above $\log n$ times, then
\begin{equation*} \bar{P}^G \left[ \bar{X}_{\log^5 n} \notin \cC_1 \right] \leq \left( \frac{c}{\log n} \right)^{\log n} \leq \frac{c}{n^{1+c'}}, \end{equation*}
which proves \eqref{eq:xbarinc1}.

To prove \eqref{eq:taujump} first note that $P^{\cC_1}[~\cdot~]= \sum_{z\in\cC_1}\pi(z)P^{\cC_1}_z[~\cdot~]$. With \eqref{eq:piu} it follows that
\begin{equation} \sup_{z\in\cC_1}P^{\cC_1}_z[~\cdot~] \leq \frac{1}{\min_{z\in \cC_1}\pi(z)} P^{\cC_1}[~\cdot~] \leq cn\log n P^{\cC_1}[~\cdot~]. \label{eq:supzc1} \end{equation}
Using \eqref{eq:xbarinc1} we have
\begin{align*} \bar{P}^G[\tau \leq &u\rho(2-\xi)\xi n + 2\log^5 n] \\ & \leq \sup_{z\in\cC_1}P^{\cC_1}_z[\text{cover time of $\cC_1$ is smaller than $u\rho(2-\xi)\xi n+2\log^{5}n$}] + \bar{P}^G[\bar{X}_{\log^{5}n} \notin \cC_1] \\ &\leq \sup_{z\in\cC_1}P^{\cC_1}[\text{vacant set }\cV(u\rho(2-\xi)\xi n+2\log^{5}n)\text{ is empty}] + \frac{c}{n^{1+c'}}. \end{align*}
Since adding a trajectory of length $2\log^{5}n$ can decrease the size of the vacant set by at most $2\log^{5}n = o(n)$, it follows that asymptotically $|\cV(u\rho(2-\xi)\xi n+2\log^{5}n)|=|\cV(u\rho(2-\xi)\xi n)|+o(n)$. Using \eqref{eq:supzc1}, from \eqref{eq:concentration} and part (1) of Proposition~\ref{prop:sizeexpconc} it follows that for a typical graph and $\epsilon$ small enough
\begin{equation*} \sup_{z\in\cC_1}P^{\cC_1}_z[|\cV(u\rho(2-\xi)\xi n)|< \epsilon n] \leq cn\log n P^{\cC_1}[|\cV(u\rho(2-\xi)\xi n)|< \epsilon n] \leq c'n\log n e^{-c''n^{\frac{\delta}{2}}}, \end{equation*}
where $\delta>0$ is the parameter defining the length of the random walk bridges in Section~\ref{sec:concent}. For any choice of $\delta$ we can find constants such that the above expression is smaller than $\frac{c}{n^{1+c'}}$, and \eqref{eq:taujump} follows.

For the proof of \eqref{eq:tvto0} let $P^{\cC_1}_{\mu}$ denote the law of the random walk on $\cC_1$ started at initial distribution $\mu$. When $\bar{X}$ is on $\cC_1$ at time $\log^5n$, it has then some distribution $\mu$ and it cannot cover $\cC_1$ in time $\log^5 n$. Using \eqref{eq:mix}, we thus get for every $y\in \cC_1$
\begin{align*} \left|\bar{P}^G[\bar{X}_{2\log^5n}=y] - \pi(y)\right| &\leq \bar{P}^G \left[ \bar{X}_{\log^5 n} \notin \cC_1 \right] + \sup_{\mu}\left|P^{\cC_1}_{\mu}[X_{\log^5n}=y] - \pi(y)\right| \\ &\leq \bar{P}^G \left[ \bar{X}_{\log^5 n} \notin \cC_1 \right] + \frac{1}{\min_{v\in \cC_1}\pi(v)} e^{-\lambda_{\cC_1}\log^5n}, \end{align*}
With \eqref{eq:xbarinc1}, \eqref{prop:gap} and \eqref{eq:pil}, it follows for every $y\in \cC_1$
\begin{equation}  \left|\bar{P}^G[\bar{X}_{2\log^5n}=y] - \pi(y)\right| \leq  \frac{c}{n^{1+c'}}. \label{eq:mixed} \end{equation}
We set $\pi\equiv0$ on $G \setminus \cC_1$, and by \eqref{eq:xbarinc1} and \eqref{eq:taujump}, $\bar{P}^G[\bar{X}_{2\log^5n}=y] \leq  \frac{c}{n^{1+c'}}$ for $y\in G \setminus \cC_1$. Thus \eqref{eq:mixed} holds for all $y\in G$. \eqref{eq:tvto0} follows from \eqref{eq:mixed} since by e.g.~\cite[Proposition 4.2]{LPW09} we have $\left\|\bar{\pi}-\pi\right\|_{\text{TV}} \leq n \max_{y\in G} \left|\bar{P}^G[\bar{X}_{2\log^5n}=y]-\pi(y)\right|$.

We can now define the coupling of $X$ under $P^{\cC_1}$ and $\bar{X}$ under $\bar{P}^G$. Consider again the (possibly enlarged) auxiliary probability space $(\tilde{\Omega}, \tilde{\cA}, \tilde{P})$, on which originally $\bar{X}$ was defined. On this auxiliary space we define a random variable $Y$ on $G$ with distribution $\pi$. $Y$ depends on the graph $G$ (i.e.~it depends on the random choices in Algorithm~\ref{algo} that determine the states of edges), and it may depend on the random choices that determine the trajectory of $\bar{X}$ up to time $2\log^5n$, but it is independent of all the random choices that determine the trajectory of $\bar{X}$ at times $2\log^5n+k$, $k\geq1$. By e.g.~\cite[Proposition 4.7]{LPW09} we can choose $Y$ such that $\tilde{P}[\bar{X}_{2\log^{5}n}\neq Y]=\left\|\bar{\pi}-\pi\right\|_{\text{TV}}$. By \eqref{eq:tvto0} it follows that
\begin{equation} \tilde{P}[\bar{X}_{2\log^{5}n}\neq Y]\leq \frac{c}{n^{c'}}. \label{eq:coupy} \end{equation}
Moreover, we define on $\tilde{\Omega}$ a collection $\tilde{X}^z$, $z\in G$, of independent simple random walks on $G$ started at $z$, independent of $\bar{X}$ and $Y$ (i.e.~depending only on the random choices in Algorithm~\ref{algo} that determine the  states of edges, but independent of the random choices that determine the trajectory of $\bar{X}$).

Define the process $X$ using $\bar{X}$, $Y$ and $\tilde{X}^z$ as follows,
\begin{equation} \begin{split} \begin{aligned} &\left. \begin{aligned} & X_k=\bar{X}_{k+2\log^{5}n} \text{ for } 0\leq k \leq \tau, \\ &	X_k= \tilde{X}^{\bar{X}_{\tau}}_k \text{ for } k>\tau, 	\end{aligned}\right\} &\text{if } \bar{X}_{\log^{5}n} &\in \cC_1 \text{ and } Y=\bar{X}_{2\log^{5}n}, \\
							&~X_k=\tilde{X}_k^{Y} \text{ for }k\geq0,																								&\text{if } \bar{X}_{\log^{5}n} &\in \cC_1 \text{ and } Y\neq\bar{X}_{2\log^{5}n}, \label{eq:coupling}\\
							&~X_k=\tilde{X}_k^{Y} \text{ for }k\geq0,																								&\text{if } \bar{X}_{\log^{5}n} &\notin \cC_1. \end{aligned} \end{split} \end{equation}
Let $Q^G$ denote the joint law of $\bar{X}$ and $X$ on $\{1,2,...,n\}^{2\mathbb{N}_0}$. Since $Y$ has distribution $\pi$ and $\bar{X}$ behaves like a random walk between occurences of step $(4)$ of Algorithm~\ref{algo}, in any case $X$ is a simple random walk on $\cC_1$ started stationary, so $Q^G$ is indeed a coupling of simple random walk on the giant component and the process $\bar{X}$ from Algorithm~\ref{algo} with marginal laws $P^{\cC_1}$ and $\bar{P}^G$ respectively.

By \eqref{eq:coupy}, \eqref{eq:xbarinc1} and \eqref{eq:taujump} the first case of the coupling \eqref{eq:coupling} happens with probability $\geq 1- \frac{c}{n^{c'}}$, and by \eqref{eq:taujump} also $\tau > u\rho(2-\xi)\xi n + 2\log^{5}n$ with high probability, and the statement of the proposition follows.
\end{proof}

The coupling \eqref{eq:coupling} defined in the proof of Proposition~\ref{prop:coupling} will allow us to deduce the phase transition in the vacant set left by $X$ from the phase transition in the vacant set left by $\bar{X}$. To apply the results of Section~\ref{sec:size}, we have to find the relation between the sizes of these vacant sets. This relation is given by the next lemma. Denote $\bar{\cV}^{u}=\bar{\cV}(u\rho(2-\xi)\xi n + 2 \log^5n)=G\setminus \bar{X}_{[0,u\rho(2-\xi)\xi n + 2 \log^5n]}$ and as before $\cV^{u}=\cV(u\rho(2-\xi)\xi n)=\cC_1\setminus X_{[0,u\rho(2-\xi)\xi n]}$. 

\begin{lemma} \label{lem:sizescomp}
For a sequence of typical graphs $G$ and any fixed $u>0$, with respect to the corresponding sequence of couplings $Q^G$, the random variables $|\bar{\cV}^{u}|$ and $|\cV^{u}|$ satisfy
\begin{equation*} |\bar{\cV}^{u}| = |\cV^{u}| + (1-\xi)n+o(n) \quad Q^{G}\text{-a.a.s.} \end{equation*}
\end{lemma}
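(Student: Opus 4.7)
The plan is to exploit the coupling in Proposition~\ref{prop:coupling} to express $\bar{X}_{[0,u\rho(2-\xi)\xi n+2\log^5 n]}$ as a disjoint(ish) union involving $X_{[0,u\rho(2-\xi)\xi n]}$, and then just compare cardinalities.

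First I would condition on the high-probability event $E = \{X_k = \bar{X}_{k+2\log^5 n} \text{ for all } 0\leq k\leq u\rho(2-\xi)\xi n\}$ from Proposition~\ref{prop:coupling}, which has $Q^G$-probability $\geq 1 - cn^{-c'}$. On $E$ we have the set equality
\begin{equation*}
\bar{X}_{[0,\,u\rho(2-\xi)\xi n+2\log^5 n]} \;=\; \bar{X}_{[0,\,2\log^5 n]} \;\cup\; X_{[0,\,u\rho(2-\xi)\xi n]}.
\end{equation*}
Writing $A=\bar{X}_{[0,2\log^5 n]}\subset G$ and $B=X_{[0,u\rho(2-\xi)\xi n]}\subset \cC_1$, inclusion-exclusion gives
\begin{equation*}
|\bar{\cV}^u| \;=\; n - |A\cup B| \;=\; (n-|\cC_1|) + (|\cC_1|-|B|) - |A\setminus B| \;=\; (n-|\cC_1|) + |\cV^u| - |A\setminus B|.
\end{equation*}

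Next I would bound the two correction terms. Since $G$ is typical, property \eqref{prop:giant} gives $\bigl|\,|\cC_1|-\xi n\,\bigr|\leq n^{3/4}$, hence $n-|\cC_1| = (1-\xi)n+o(n)$. For the other term, $A$ is the range of a trajectory of length $2\log^5 n+1$, so trivially $|A\setminus B|\leq |A|\leq 2\log^5 n+1 = o(n)$, deterministically. Combining these estimates, on $E$ we have
\begin{equation*}
|\bar{\cV}^u| \;=\; (1-\xi)n + |\cV^u| + o(n).
\end{equation*}
Since $Q^G[E^c]\leq cn^{-c'}\to 0$ for typical $G$, the claim follows $Q^G$-a.a.s.

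There is no real obstacle here beyond careful bookkeeping: the argument is entirely set-theoretic once the coupling identification on $E$ is in place. The only minor point to watch is that one of the endpoints of the concatenation, namely $\bar{X}_{2\log^5 n}=X_0$, belongs to both $A$ and $B$, but this only affects the count by $O(1)$ and is absorbed into the $o(n)$ error. No further probabilistic input about $\bar{X}$ or $X$ is needed beyond what Proposition~\ref{prop:coupling} and property \eqref{prop:giant} already provide.
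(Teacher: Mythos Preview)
Your proof is correct and follows essentially the same approach as the paper: both arguments condition on the coupling event of Proposition~\ref{prop:coupling}, use that the shifted $\bar{X}$-trajectory coincides with the $X$-trajectory, bound the contribution of the initial $2\log^5 n$ steps trivially, and invoke \eqref{prop:giant} for the size of $\cC_1$. The only cosmetic difference is that the paper works with the intermediate vacant set $\bar{\cW}^{u}=G\setminus \bar{X}_{[2\log^5 n,\,u\rho(2-\xi)\xi n+2\log^5 n]}$ and the set identity $\bar{\cW}^{u}=\cV^u\cup\bigcup_{i\geq 2}\cC_i(G)$, whereas you compute cardinalities directly via inclusion--exclusion on the visited sets; the content is the same.
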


\begin{proof}
Denote $\bar{\cW}^{u}=G\setminus \bar{X}_{[2\log^{5}n,u\rho(2-\xi)\xi n+2\log^{5}n]}$. Then $\left||\bar{\cV}^{u}|-|\bar{\cW}^{u}|\right|\leq 2\log^{5}n$, and for any $\epsilon>0$, $\epsilon n-2\log^{5}n\geq \frac{\epsilon}{2}n$ for $n$ large enough, thus
\begin{equation*} Q^G\left[\left| |\bar{\cV}^{u}| - |\cV^{u}| - (1-\xi)n \right|>\epsilon n\right] \leq  Q^G\left[\left| |\bar{\cW}^{u}| - |\cV^{u}| - (1-\xi)n \right|>\frac{\epsilon}{2} n\right]. \end{equation*}
By Proposition~\ref{prop:coupling}, $Q^G$-a.a.s.~the sets $\bar{\cW}^{u}$ and $\cV^{u}$ differ only by the small components of the graph $G$, i.e.~$\bar{\cW}^{u} = \cV^{u} \cup \bigcup_{i\geq2} \cC_i(G)$. By \eqref{prop:giant}, in a typical graph $G$ the total size of small components satisfies $\left|\bigcup_{i\geq2} \cC_i(G) - (1-\xi)n \right| \leq \frac{\epsilon}{2}n$ for $n$ large enough. Therefore, for every $\epsilon>0$,
\begin{equation*} Q^G\left[\left| |\bar{\cW}^{u}| - |\cV^{u}| - (1-\xi)n \right|>\frac{\epsilon}{2} n\right] \leq Q^G\left[\bar{\cW}^{u} \neq \cV^{u} \cup \bigcup_{i\geq2} \cC_i(G)\right] \to 0 \text{ as } n\to\infty. \end{equation*}
This proves the lemma.
\end{proof}


\vspace{\baselineskip}
\section{Proof of main result} \label{sec:proof}

We first extend the coupling $Q^G$ that was defined for typical graphs in Proposition~\ref{prop:coupling}. Let $Q^G$ for a non-typical graph $G$ be the joint law on $\{1,2,...,n\}^{2\mathbb{N}_0}$ of two independent processes $X$ and $\bar{X}$ under $P^{\cC_1(G)}$ and $\bar{P}^G$ respectively. We define the annealed coupling measure $\bQ_n$ on the space $\Omega_n'=\cG(n)\times\{1,2,...,n\}^{2\mathbb{N}_0}$ with the canonical coordinates $G$, $\bar{X}$, $X$ as
\begin{equation*} \bQ_n(A\times B)=\sum_{G\in A} \PP_{n,p}(G) Q^G(B), \end{equation*}
where $A\in\GG_n$ and $B=B_1\times B_2$ with $B_i\in\FF_n$ for $i=1,2$ (cf.~\eqref{def:annealed} for the definition of the $\sigma$-algebras $\GG_n$ and $\FF_n$). Then $\bQ_n$ is a coupling of the two processes $X$ and $\bar{X}$, where $X$ has marginal law $\bP_n$ and $\bar{X}$ has marginal law $\bar{\bP}_n$, and since every $G$ is $\PP_{n,p}$-a.a.s.~a typical graph the statements of Proposition~\ref{prop:coupling} and \lemref{lem:sizescomp} hold $\bQ_n$-a.a.s.

\begin{proof}[Proof of \thmref{thm1}]
For the proof we use the annealed coupling $\bQ_n$ of $X$ and $\bar{X}$. As a direct consequence of Proposition~\ref{prop:sizeexpconc} and \lemref{lem:sizescomp} we obtain that
\begin{equation*} |\bar{\cV}^{u}| = \xi n \EE_{\cT}\left[e^{-u \capa_{\cT}(\varnothing)}\right] + (1-\xi)n + o(n) \qquad \bQ_n \text{-a.a.s.} \end{equation*}
It follows from \lemref{lem:defdec} and the classical results on random graphs that the graph $\bar{\cV}^{u}=G\setminus\bar{X}_{[0,u\rho(2-\xi)\xi n + 2\log^{5}n]}$ exhibits a phase transition at the value $u$ such that  $\lim_{n\to\infty}|\bar{\cV}^{u}| \frac{\rho}{n}=1$. This value is the solution $u_{\star}$ of the equation
\begin{equation} \rho \xi \EE_{\cT}\left[e^{-u \capa_{\cT}(\varnothing)}\right]+\rho(1-\xi)=1. \label{eq:ustar} \end{equation}
$\bar{\cV}^{u}$ has therefore $\bQ_n$-a.a.s.~a unique giant component $\cC_1(\bar{\cV}^u)$ of size $\zeta(u,\rho)n+o(n)$ and all other components of size smaller than $\bar{C}\log n$ if $u<u_{\star}$, and it has $\bQ_n$-a.a.s.~all components of size smaller than $\bar{C}\log n$ for $u>u_{\star}$, where $\bar{C}>0$ is some fixed constant. For $u<u_{\star}$, the constant $\zeta(u,\rho)$ is given as the unique solution in $(0,1)$ of the equation
\begin{equation} \exp\left\{-\zeta \left(\rho \xi \EE_{\cT}\left[e^{-u \capa_{\cT}(\varnothing)}\right]+\rho(1-\xi)\right)\right\}=1-\zeta. \label{eq:zeta} \end{equation}
It remains to translate this phase transition to the vacant graph $\cV^u$ of the random walk on the giant component.

Let us first translate the phase transition to the subgraph induced by the slightly enlarged set $\bar{\cV}^{u} \cup \bar{X}_{[0,2\log^{5}n]}$. Adding one vertex of degree $d$ in $G$ to the graph $\bar{\cV}^{u}$ can merge at most $d$ components of $\bar{\cV}^{u}$. By \eqref{prop:maxdeg} the degree $d$ is $\bQ_n$-a.a.s.~bounded by $\log n$, so adding the vertices of $\bar{X}_{[0,2\log^{5}n]}$ can $\bQ_n$-a.a.s.~merge at most $2\log^{6}n$ components. It follows that $\bQ_n$-a.a.s., by adding $\bar{X}_{[0,2\log^{5}n]}$ to $\bar{\cV}^{u}$, any component of size smaller than $\bar{C}\log n$ in $\bar{\cV}^{u}$ can either merge with the giant component if there is one, or it can become a component of size at most $2\bar{C}\log^{7}n$. Also, in the supercritical phase the giant component can $\bQ_n$-a.a.s.~grow by at most $2\bar{C}\log^{7}n=o(n)$. Therefore, the graph induced by $\bar{\cV}^{u} \cup \bar{X}_{[0,2\log^{5}n]}$ exhibits a phase transition at $u_{\star}$ with the same size $\zeta(u,\rho)n+o(n)$ of the giant component for $u<u_{\star}$, and with the bound $2\bar{C}\log^{7}n$ for the size of the second largest component for $u<u_{\star}$ and the largest component for $u>u_{\star}$.

Recall that $\bar{\cW}^{u}$ denotes the set $G\setminus \bar{X}_{[2\log^{5}n,u\rho(2-\xi)\xi n+2\log^{5}n]}$ as well as the induced subgraph. We have the following inclusions of sets and induced subgraphs in $G$,
\begin{equation*} \bar{\cV}^{u} \subset \bar{\cW}^{u} \subset \bar{\cV}^{u} \cup \bar{X}_{[0,2\log^{5}n]}. \end{equation*}
Consider the vacant set $\cV^u\subset \cC_1$ of the random walk $X$ on the giant component. By Proposition~\ref{prop:coupling} and since every graph is $\PP_{n,p}$-a.a.s.~a typical graph, we have $\bQ_n$-a.a.s.~$\bar{\cW}^{u} = \cV^u \cup \bigcup_{i\geq2} \cC_i(G)$. It follows that
\begin{align} \bar{\cV}^{u} &\subset \cV^u \cup \bigcup_{i\geq2} \cC_i(G) \quad \bQ_n\text{-a.a.s.} \label{eq:inclvs1} \\ \cV^u &\subset \bar{\cV}^{u} \cup \bar{X}_{[0,2\log^{5}n]} \quad \bQ_n\text{-a.a.s.} \label{eq:inclvs2} \end{align}

Note that $\bQ_n$-a.a.s.~the union $\bigcup_{i\geq2} \cC_i(G)$ of all components of $G$ except the largest are exactly all small components of size smaller than $C\log n$. From this and \eqref{eq:inclvs1} it follows that $|\cC_1(\cV^u)|$ is $\bQ_n$-a.a.s.~bounded from below by $|\cC_1(\bar{\cV}^u)|$ whenever $|\cC_1(\bar{\cV}^u)|$ is larger than of order $\log n$. From \eqref{eq:inclvs2} it follows that $|\cC_1(\cV^u)|$ is $Q_n$-a.a.s.~bounded from above by $\left|\cC_1\left(\bar{\cV}^{u} \cup \bar{X}_{[0,2\log^{5}n]}\right)\right|$. The respective phase transitions in $\bar{\cV}^u$ and $\bar{\cV}^{u} \cup \bar{X}_{[0,2\log^{5}n]}$ thus immediately imply the statements \eqref{eq:thmeq1} and \eqref{eq:thmeq3} of \thmref{thm1}.

To prove \eqref{eq:thmeq2}, i.e.~the uniqueness of the giant component in the supercritical phase, fix $u<u_{\star}$ and let $\cL_n$ be the event that there are two distinct components $\cC_a$ and $\cC_b$ in $\cV^u$ both of size strictly larger than $2\bar{C}\log^{7}n$, with $\bar{C}$ as defined below \eqref{eq:ustar}. We show that $\bQ_n[\cL_n]\to0$ as $n\to\infty$, which proves \eqref{eq:thmeq2}. First note that if $\cL_n$ happens, then either $\cC_a\cap\bar{\cV}^u$ and $\cC_b\cap\bar{\cV}^u$ are distinct components in $\bar{\cV}^u$ or the inclusion in \eqref{eq:inclvs1} does not hold, which is unlikely, so
\begin{equation*} \bQ_n[\cL_n] \leq \bQ_n\left[\cL_n,~\mbox{$\cC_a\cap\bar{\cV}^u$ and $\cC_b\cap\bar{\cV}^u$ are distinct components in $\bar{\cV}^u$}\right] +o(1) \text{ as } n\to\infty. \end{equation*}
But if $\cC_a\cap\bar{\cV}^u$ and $\cC_b\cap\bar{\cV}^u$ are distinct components in $\bar{\cV}^u$, at least one of $\cC_a\cap\bar{\cV}^u$ or $\cC_b\cap\bar{\cV}^u$ is subset of $\bigcup_{i\geq2} \cC_i(\bar{\cV}^u)$, which is a union of components that are $\bQ_n$-a.a.s.~all of size smaller than  $\bar{C}\log n$. On the other hand by \eqref{eq:inclvs2}, $\cC_a \subset \left(\cC_a\cap\bar{\cV}^u\right) \cup \bar{X}_{[0,2\log^{5}n]}$, and as discussed before this last union cannot be larger than $2\bar{C}\log^{7}n$ if $\cC_a\cap\bar{\cV}^u$ consists only of components of size smaller than $\bar{C}\log n$. Thus
\begin{align*} \bQ_n[\cL_n] &\leq \bQ_n\left[\cL_n,~\mbox{$\cC_a\cap\bar{\cV}^u$ or $\cC_b\cap\bar{\cV}^u$ is subset of $\bigcup_{i\geq2} \cC_i(\bar{\cV}^u)$}\right] +o(1) \\
													&\leq \bQ_n\left[\mbox{at least one of the $\cC_i(\bar{\cV}^u),~{i\geq2}$, is larger than $\bar{C}\log n$}\right] +o(1) \\ &= o(1) \text{ as } n\to\infty. \end{align*}
This proves \eqref{eq:thmeq2}.

To see that the critical parameter $u_{\star}$ coincides with the critical parameter $u_{\star}$ of random interlacements on a Poisson($\rho$)-Galton-Watson tree conditioned on non-extinction, it suffices to notice that the characterizing equations \eqref{eq:ustar} and \eqref{eq:ustarri} of these two parameters are the same.
\end{proof}

\vspace{\baselineskip}

\renewcommand\MR[1]{\relax\ifhmode\unskip\spacefactor3000
\space\fi \MRhref{#1}{#1}}
\renewcommand{\MRhref}[2]%
{\href{http://www.ams.org/mathscinet-getitem?mr=#1}{MR #2}}
\providecommand{\href}[2]{#2}
\bibliographystyle{amsalpha}
\bibliography{references}

\newcommand{\etalchar}[1]{$^{#1}$}
\providecommand{\bysame}{\leavevmode\hbox to3em{\hrulefill}\thinspace}
\providecommand{\MR}{\relax\ifhmode\unskip\space\fi MR }
\providecommand{\MRhref}[2]{%
  \href{http://www.ams.org/mathscinet-getitem?mr=#1}{#2}
}
\providecommand{\href}[2]{#2}
\begin{thebibliography}{AKL{\etalchar{+}}79}

\bibitem[AB92]{AB92}
David~J. Aldous and Mark Brown, \emph{Inequalities for rare events in
  time-reversible {M}arkov chains. {I}}, Stochastic inequalities ({S}eattle,
  {WA}, 1991), IMS Lecture Notes Monogr. Ser., vol.~22, Inst. Math. Statist.,
  Hayward, CA, 1992, pp.~1--16. \MR{1228050}

\bibitem[AF]{AFb}
David~J. Aldous and James~A. Fill, \emph{Reversible markov chains and random
  walks on graphs}, Book in preparation,
  \url{http://www.stat.berkeley.edu/aldous/RWG/book.html}.

\bibitem[AKL{\etalchar{+}}79]{Al79}
Romas Aleliunas, Richard~M. Karp, Richard~J. Lipton, L{\'a}szl{\'o} Lov{\'a}sz,
  and Charles Rackoff, \emph{Random walks, universal traversal sequences, and
  the complexity of maze problems}, 20th {A}nnual {S}ymposium on {F}oundations
  of {C}omputer {S}cience ({S}an {J}uan, {P}uerto {R}ico, 1979), IEEE, New
  York, 1979, pp.~218--223. \MR{598110}

\bibitem[AN72]{AN72}
Krishna~B. Athreya and Peter~E. Ney, \emph{Branching processes},
  Springer-Verlag, New York, 1972, Die Grundlehren der mathematischen
  Wissenschaften, Band 196. \MR{0373040}

\bibitem[BKW06]{BKW06}
Itai {Benjamini}, Gady {Kozma}, and Nicholas {Wormald}, \emph{{The mixing time
  of the giant component of a random graph}},
  \href{http://arxiv.org/abs/math/0610459}{arXiv:math/0610459} (2006).

\bibitem[Bol01]{Bol01}
B{\'e}la Bollob{\'a}s, \emph{Random graphs}, second ed., Cambridge Studies in
  Advanced Mathematics, vol.~73, Cambridge University Press, Cambridge, 2001.
  \MR{1864966}

\bibitem[BS08]{BS08}
Itai Benjamini and Alain-Sol Sznitman, \emph{Giant component and vacant set for
  random walk on a discrete torus}, J. Eur. Math. Soc. (JEMS) \textbf{10}
  (2008), no.~1, 133--172. \MR{2349899}

\bibitem[CF11]{CF11}
Colin Cooper and Alan Frieze, \emph{Component structure of the vacant set
  induced by a random walk on a random graph}, Proceedings of the
  {T}wenty-{S}econd {A}nnual {ACM}-{SIAM} {S}ymposium on {D}iscrete
  {A}lgorithms (Philadelphia, PA), SIAM, 2011, pp.~1211--1221. \MR{2858394}

\bibitem[{\v{C}}T11]{CT11}
Ji\v{r}\'{i} {\v{C}}ern{\'y} and Augusto {Teixeira}, \emph{{Critical window for
  the vacant set left by random walk on random regular graphs}},
  \href{http://arxiv.org/abs/1101.1978}{arXiv:1101.1978} (2011).

\bibitem[{\v{C}}T12]{CT12}
Ji\v{r}\'{i} {\v{C}}ern{\'y} and Augusto Teixeira, \emph{From random walk
  trajectories to random interlacements, expanded lecture notes for the
  \uppercase{XV} brazilian school of probability in 2011}, Ensaios
  Matem\'aticos \textbf{23} (2012).

\bibitem[{\v{C}}TW11]{CTW11}
Ji\v{r}\'{i} {\v{C}}ern{\'y}, Augusto {Teixeira}, and David {Windisch},
  \emph{{Giant vacant component left by a random walk in a random d-regular
  graph}}, Annales de L'Institut Henri Poincare Section Physique Theorique
  \textbf{47} (2011), 929--968.

\bibitem[Dur10]{Dur10}
Rick Durrett, \emph{Random graph dynamics}, Cambridge Series in Statistical and
  Probabilistic Mathematics, Cambridge University Press, Cambridge, 2010.
  \MR{2656427 (2011c:05308)}

\bibitem[ER61]{ER61}
Paul Erd{\H{o}}s and Alfr{\'e}d R{\'e}nyi, \emph{On the evolution of random
  graphs}, Bull. Inst. Internat. Statist. \textbf{38} (1961), 343--347.
  \MR{0148055}

\bibitem[HM12]{HM12}
Hamed Hatami and Michael Molloy, \emph{The scaling window for a random graph
  with a given degree sequence}, Random Structures Algorithms \textbf{41}
  (2012), no.~1, 99--123. \MR{2943428}

\bibitem[Hof08]{vdH08}
Remco Van~Der Hofstad, \emph{Random graphs and complex networks}, Lecture notes
  in preparation, \url{http://www.win.tue.nl/~rhofstad/NotesRGCN.pdf}, 2008.

\bibitem[J{\L}R00]{JLR00}
Svante Janson, Tomasz {\L}uczak, and Andrzej Rucinski, \emph{Random graphs},
  Wiley-Interscience Series in Discrete Mathematics and Optimization,
  Wiley-Interscience, New York, 2000. \MR{1782847}

\bibitem[JLT12]{JLT12}
M.~{Jara}, C.~{Landim}, and A.~{Teixeira}, \emph{{Universality of trap models
  in the ergodic time scale}},
  \href{http://arxiv.org/abs/1208.5675}{arXiv:1208.5675} (2012).

\bibitem[LP12]{LP}
R.~Lyons and Y.~Peres, \emph{Probability on trees and networks}, Cambridge
  University Press. In preparation, current version available at
  \url{http://mypage.iu.edu/~rdlyons/}, 2012.

\bibitem[LPW09]{LPW09}
David~A. Levin, Yuval Peres, and Elizabeth~L. Wilmer, \emph{Markov chains and
  mixing times}, American Mathematical Society, Providence, RI, 2009, With a
  chapter by James G. Propp and David B. Wilson. \MR{2466937}

\bibitem[McD98]{McD98}
Colin McDiarmid, \emph{Concentration}, Probabilistic methods for algorithmic
  discrete mathematics, Algorithms Combin., vol.~16, Springer, Berlin, 1998,
  pp.~195--248. \MR{1678578}

\bibitem[Szn10]{Sz10}
Alain-Sol Sznitman, \emph{Vacant set of random interlacements and percolation},
  Ann. of Math. (2) \textbf{171} (2010), no.~3, 2039--2087. \MR{2680403}

\bibitem[Tas10]{Tas10}
Martin Tassy, \emph{Random interlacements on {G}alton-{W}atson trees},
  Electron. Commun. Probab. \textbf{15} (2010), 562--571. \MR{2737713}

\bibitem[Tei09]{Tei09}
Augusto Teixeira, \emph{Interlacement percolation on transient weighted
  graphs}, Electron. J. Probab. \textbf{14} (2009), no. 54, 1604--1628.
  \MR{2525105}

\bibitem[TW11]{TW11}
Augusto Teixeira and David Windisch, \emph{On the fragmentation of a torus by
  random walk}, Comm. Pure Appl. Math. \textbf{64} (2011), no.~12, 1599--1646.
  \MR{2838338}

\end{thebibliography}

\end{document}